\documentclass[11pt,a4paper,twoside,draft]{article}
\usepackage{amssymb,amsthm}
\usepackage[namelimits,sumlimits,fleqn]{amsmath}
\usepackage{setspace}
\usepackage[a4paper,top=33mm, bottom=27mm, left=20mm, right=20mm]{geometry}
\usepackage[small, margin=20pt]{caption}
\usepackage{float}\restylefloat{figure}
\usepackage{url}
\usepackage[final]{graphicx}
\usepackage{subfig,multicol}

\usepackage{fancyhdr}
\pagestyle{fancy} \fancyhead{} \fancyhead[CO]{A probabilistic approach to consecutive pattern
avoiding in permutations}
\fancyhead[CE]{G. Perarnau} \fancyfoot{}
\fancyfoot[C]{\thepage}

\setlength{\parskip}{1.3 ex plus0.5ex minus 0.2ex}
\setlength{\parindent}{0em}

\allowdisplaybreaks[4]

\title{A probabilistic approach to consecutive pattern avoiding in permutations
 \thanks{The author wants to thank
the FPU grant from \emph{Ministerio de Educaci\'on de Espa\~{n}a}.}}
\author{Guillem Perarnau\\
\\
  \emph{Departament de Matem\`atica Aplicada $IV$.}\\ 
  \emph{Universitat Polit\`ecnica de Catalunya, BarcelonaTech.}\\
  \url{guillem.perarnau@ma4.upc.edu}\\}
\date{\today}

\theoremstyle{plain}
\newtheorem{theorem}{Theorem}
\newtheorem{lemma}[theorem]{Lemma}

\newtheorem{corollary}[theorem]{Corollary}

\theoremstyle{definition}
\newtheorem{conjecture}[theorem]{Conjecture}

\newtheorem*{acknowledgement}{Acknowledgement}

\theoremstyle{definition}

\newcommand{\rst}[1]{\ensuremath{{\mathbin\upharpoonright}%
\raise-.5ex\hbox{$#1$}}} 





\setcounter{totalnumber}{2}

\renewcommand{\S}{\mathcal{S}}
\newcommand{\M}{\mathcal{M}}
\newcommand{\N}{\mathcal{N}}
\newcommand{\A}{\mathcal{A}}

\newcommand{\T}{\mathcal{T}}
\newcommand{\supp}{supp}
\newcommand{\eps}{\varepsilon}
\newcommand{\st}{\mathrm{st}}

\begin{document}

\pagenumbering{arabic}

\setcounter{section}{0}

\bibliographystyle{plain}

\maketitle

\onehalfspace

\thispagestyle{empty}
 
\begin{abstract}
We present a new approach to the problem of enumerating permutations of length $n$ that avoid a
fixed consecutive pattern of length $m$. We use this idea to give explicit upper and lower
bounds on the number of permutations avoiding a pattern of length $m$. As a corollary, we obtain a
simple proof of
the CMP conjecture~\cite{en2003}, regarding the most avoided pattern, recently shown by
Elizalde~\cite{e2012}. Finally, we also show that most of the patterns behave similar to the least
avoided
one.
\end{abstract}

\textbf{Keywords:} Permutations, Consecutive pattern avoiding, Monotone patterns, CMP conjecture, Probabilistic method.

\section{Introduction}

Let $\S_n$ be the symmetric group of permutations of length $n$. Consider a permutation $\pi=
(\pi_1,\dots,\pi_n)\in \S_n$ and a \emph{pattern} $\sigma=
(\sigma_1,\dots,\sigma_m)\in \S_m$. Henceforth $m$ will be a fixed integer while $n$ will be
considered to tend to infinity. For any sequence of different
positive integers $X=(x_1,\dots ,x_k)$, we define the \emph{standardization} of $X$,
$\st(x_1,\dots, x_k)$, as the permutation in $\S_k$ obtained by relabeling each element
of $X$ on the set $\{1,\dots ,k\}$ such that the order among all the elements of $X$ is
preserved.

A permutation $\pi\in\S_n$ \emph{contains $\sigma$ as a consecutive pattern} if there exists $0\leq
i\leq n-m$ such that $\st(\pi_{i+1},\dots,\pi_{i+m}) = \sigma$, this is, there are $m$ consecutive
elements
in $\pi$ that have the relative order prescribed by $\sigma$. For instance, if $\sigma=(12\dots m)$,
$\pi$ contains $\sigma$ as a consecutive pattern if and only if it contains $m$ consecutive
increasing elements (a run of length $m$). A pattern $\pi\in\S_n$ is called \emph{$\sigma$-avoiding}
if it
does not contain $\sigma$ as a consecutive pattern. Denote by $\alpha_n(\sigma)$, the number of
permutations in $\S_n$ that are $\sigma$-avoiding.

The problem of determining $\alpha_n(\sigma)$ is inspired by the problem of finding the number of
permutations of length $n$ that avoid a pattern $\sigma$ non necessarily in consecutive positions. A
permutation  $\pi\in\S_n$  \emph{contains} $\sigma$ if there exist $1\leq i_1<\dots<i_m\leq n$ such
that
$\st(\pi_{i_1},\dots,\pi_{i_m})=\sigma$. Clearly, if $\pi$ avoids $\sigma$, then $\pi$ also avoids
$\sigma$ as a consecutive pattern. Knuth~\cite{k1973} introduced the latter problem and exactly
determined the number of permutations avoiding some pattern of length $3$. There are many
interesting results in the area~(see e.g.~\cite{b1997,af2000}) as well as the famous Stanley-Wilf
conjecture which was solved by Marcus and Tardos~\cite{mt2004}.

To provide an exact formula for $\alpha_n(\sigma)$ is a tough problem when $n$ becomes large.
However, asymptotic formulas can be derived as shown by Elizalde and Noy in~\cite{en2003}.
They
provide an estimation of $\alpha_n(\sigma)$ for any pattern $\sigma$ of length $3$ and
also for some patterns of length $4$. Nowadays, an asymptotic formula of $\alpha_n(\sigma)$ is
not even known for all the patterns of length $4$.

Elizalde in~\cite{e2006} showed that for any $\sigma\in\S_m$, the following
limit exists,
$$
\rho_{\sigma}= \lim_{n\to \infty} \left(\frac{\alpha_n(\sigma)}{n!}\right)^{1/n},
$$
and that $0.7839<\rho_{\sigma}<1.$ In particular, it is known that
$\alpha_n(\sigma)\sim c\,\rho_{\sigma}^n\,n!$, for some constant $c$ that depends on $\sigma$.

Whereas it is very hard to exactly compute $\rho_{\sigma}$ for any $\sigma\in\S_m$, it is possible
to
provide upper
and lower bounds in terms of $m$. Besides, it is interesting to see which patterns are
extremal in that sense.
A pattern of length $m$ is called \emph{monotone} if it is either $(12\dots m)$ or $(m\dots 21)$.
It is clear that $\alpha_n(12\dots m)=\alpha_n(m\dots 21)$, since $\pi\in \S_n$ is $(12\dots
m)$-avoiding if and only if its reversing $\tilde{\pi}=(\pi_n, \dots,\pi_1)$ is $(m\dots
21)$-avoiding.
In~\cite{en2003} it was conjectured that monotone patterns are the
most avoided ones among all patterns of length $m$, when $n$ is large enough. This is known as the \emph{Consecutive
Monotone Pattern (CMP) Conjecture}.
\begin{conjecture}[CMP conjecture~\cite{en2003}]\label{conj:most avoided pattern}
 For any $\sigma\in \S_m$,
$$
\rho_{\sigma}\leq \rho_{(12\dots m)}\;.
$$
\end{conjecture}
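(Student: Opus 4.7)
The plan is to derive the conjecture as a corollary of two matching probabilistic estimates: a uniform upper bound on $\alpha_n(\sigma)$ valid for every $\sigma\in\S_m$, and a lower bound on $\alpha_n(12\dots m)$ that meets it asymptotically. All estimates will be carried out in the canonical i.i.d.\ model where $\pi\in\S_n$ is realised as the rank order of uniform random variables $X_1,\dots,X_n\in[0,1]$, the two elementary facts being that each length-$m$ window has standardization uniform on $\S_m$ and that standardizations of disjoint windows are independent.

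For the upper bound, my starting point is the immediate estimate obtained by slicing $[n]$ into $\lfloor n/m\rfloor$ pairwise disjoint length-$m$ windows and observing that $\sigma$-avoidance requires each of these independent windows to fail to match $\sigma$; this yields $\alpha_n(\sigma)\le n!(1-1/m!)^{\lfloor n/m\rfloor}$ and hence $\rho_\sigma\le(1-1/m!)^{1/m}$. Because this bound is uniform in $\sigma$, it does not by itself separate monotone from non-monotone patterns and must be refined by also accounting for the overlapping windows. The refinement would come from a Bonferroni / cluster-style inclusion-exclusion: the joint probabilities $\Pr[A_i\cap A_{i+j}]$ for $0<j<m$ depend on how $\sigma$ self-overlaps, are strictly maximised by the monotone pattern (where they equal $1/(m+j)!$), and may vanish when $\sigma$ has no self-overlap. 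Translated quantitatively, this should deliver an upper bound on $\rho_\sigma$ dominated by the corresponding quantity for $(12\dots m)$.

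For the matching lower bound on $\alpha_n(12\dots m)$, I would exploit the same self-similarity to construct avoiders explicitly. A natural construction partitions $[n]$ into blocks of length $m-1$, within each of which any ordering is automatically $(12\dots m)$-free, and then imposes a boundary condition forcing the last entry of each block to lie below a designated entry of the next. Every window straddling a block boundary then contains a descent and is in particular not increasing, killing all $m-1$ straddling matches at each seam simultaneously. Counting the resulting configurations and taking the $n$-th root should recover the exponential rate derived from the refined upper bound, so that $\rho_\sigma\le\rho_{(12\dots m)}$ follows in the limit.

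The main obstacle, in my view, will be the overlap-sensitive refinement of the upper bound: extracting from inclusion-exclusion a clean $\sigma$-dependent estimate requires delicate control over the cancellation between terms of different signs. One would likely either truncate the Bonferroni series at an order tailored to $m$, or reformulate the argument via a positive-correlation inequality for the events $A_i$ that bypasses inclusion-exclusion altogether, to obtain an estimate clean enough to match the explicit construction. Once these two sides are lined up, the CMP conjecture follows immediately by passing to exponential rates as $n\to\infty$.
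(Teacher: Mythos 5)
Your plan has two genuine gaps, one on each side of the comparison. The decisive one is the lower bound for the monotone pattern. Your block construction (blocks of length $m-1$, plus a forced comparison across each seam so that every straddling window contains a descent) imposes a condition of probability roughly $\tfrac12$ at each of about $n/(m-1)$ seams, so it counts only about $n!\,2^{-n/(m-1)}$ permutations and yields $\rho_{(12\dots m)}\geq 2^{-1/(m-1)}=1-\Theta(1/m)$. What is needed is a bound of the form $1-\frac{1}{m!}+\Theta\left(\frac{1}{m\cdot m!}\right)$, i.e.\ within $o(1/m!)$ of the upper bound for non-monotone patterns; indeed \emph{every} pattern satisfies $\rho_\sigma\geq 1-\frac{1}{m!}-O\left(\frac{m}{(m!)^2}\right)$ (Theorem~\ref{thm:LB}), so a bound of size $1-\Theta(1/m)$ cannot distinguish the monotone pattern from anything. (Also, as written, forcing the last entry of a block to lie \emph{below} an entry of the next block creates an ascent rather than a descent, but even with the orientation fixed the count is hopelessly lossy.) The paper does not construct avoiders at all: it imports the Elizalde--Noy characterization of $\rho_{(12\dots m)}^{-1}$ as the smallest root of an explicit power series and extracts $\rho_{(12\dots m)}\geq 1-\frac{1}{m!}+\frac{1}{m\cdot m!}+O\left(\frac{1}{m^2\cdot m!}\right)$ (Lemma~\ref{lem:cor_en}); Section~\ref{sec:monotone} explicitly notes that a direct probabilistic proof of this monotone lower bound is an open problem, which is exactly the step your construction would have to supply.

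The upper-bound refinement, which you yourself flag as the main obstacle, is also not in place, and its guiding heuristic is false as stated: the monotone pattern does \emph{not} maximize all pairwise joint probabilities $\Pr(A_i\cap A_{i+j})$. For example, with $\sigma=(132)$ and gap $j=2$ one gets $\Pr(A_i\cap A_{i+2})=3/5!=1/40>1/5!$, the monotone value. What actually singles out monotone patterns is Lemma~\ref{lem:monotone}: they are the only patterns with an overlap at $m-1$, so for every non-monotone $\sigma$ the dominant nearest-neighbour term $\Pr(A_i\cap A_{i+1})$ vanishes, while the remaining terms are controlled uniformly by Lemma~\ref{lem:bound1}. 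Feeding this into Suen's inequality (which sidesteps the sign-cancellation problem of a truncated Bonferroni expansion) gives $\rho_\sigma\leq 1-\frac{1}{m!}+O\left(\frac{1}{m^2 m!}\right)$ for non-monotone $\sigma$, and the conjecture follows by comparing this with the monotone lower bound above; note that even then the argument only covers $m\geq 5$, with small $m$ handled by earlier results. So the two quantitative estimates you hope to ``line up'' are precisely the content of Theorem~\ref{thm:UB} and Lemma~\ref{lem:cor_en}, and neither the structural dichotomy behind the former nor a sufficiently sharp replacement for the latter is present in your proposal.
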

The results in~\cite{en2003}, determining $\rho_{\sigma}$ for any
$\sigma\in \S_3$ settle in the affirmative the CMP conjecture for patterns of length
$3$. 
Elisalde and Noy in~\cite{en2012} show that the conjecture is true for the
class of non-overlapping patterns. 
They also study the monotone pattern and provide the exact value of
$\rho_{(12\dots m)}$ implicitly as the smallest root of a formal power series. 

Regarding the least avoided pattern among all the patterns of length $m$, Nakamura in~\cite{n2012}
posed the following conjecture,
\begin{conjecture}[\cite{n2012}]\label{conj:least avoided pattern}
 For any $\sigma\in \S_m$,
$$
\rho_{\sigma}\geq \rho_{(12\dots m-2, m,m-1)}\;.
$$
\end{conjecture}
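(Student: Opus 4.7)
The plan is to apply the probabilistic machinery of this paper to lower-bound $\alpha_n(\sigma)/n!$ for arbitrary $\sigma\in\S_m$ and compare it with the corresponding asymptotic for $\sigma^\star:=(1,2,\dots,m-2,m,m-1)$. The setup is the usual one: pick $\pi\in\S_n$ uniformly at random, and for $0\le i\le n-m$ let $A_i$ be the event $\st(\pi_{i+1},\dots,\pi_{i+m})=\sigma$; then $\alpha_n(\sigma)/n!=\Pr\bigl[\bigcap_i\overline{A_i}\bigr]$ and each marginal $\Pr[A_i]=1/m!$ is independent of $\sigma$, so the pattern enters only through the joint probabilities $\Pr[A_i\cap A_j]$, i.e.\ through the overlap structure of $\sigma$.

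First I would verify that $\sigma^\star$ is as non-overlapping as possible: for every shift $k\in\{1,\dots,m-2\}$ the standardizations $\st(\sigma^\star_1,\dots,\sigma^\star_{m-k})$ and $\st(\sigma^\star_{k+1},\dots,\sigma^\star_m)$ disagree, because the trailing descent $(m,m-1)$ of $\sigma^\star$ cannot be matched by an increasing prefix; only the trivial shift $k=m-1$ (overlap in a single entry) survives. Next, via an inclusion–exclusion / cluster analysis, the clusters contributing to $\alpha_n(\sigma^\star)$ are just single occurrences and chains of single-entry overlaps, giving a closed form for $\sum_n \alpha_n(\sigma^\star)x^n/n!$ and identifying $\rho_{\sigma^\star}$ as the reciprocal of the smallest positive root of an equation depending only on $m$. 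Finally, for general $\sigma$, I would invoke the probabilistic lower bound developed earlier in the paper (a Janson-type estimate adapted to the dependent events $A_i$) and compare it term by term with the analogous bound for $\sigma^\star$, aiming at $\alpha_n(\sigma)\ge(1-o(1))\,\alpha_n(\sigma^\star)$, and hence $\rho_\sigma\ge\rho_{\sigma^\star}$.

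The main obstacle is the last comparison. Each additional overlap of $\sigma$ contributes a further cluster term with alternating sign, and it is not immediate that the net effect pushes the dominant singularity of $\sum_n \alpha_n(\sigma)\,x^n/n!$ outward. A purely combinatorial resolution would be an injection from $\sigma^\star$-avoiding to $\sigma$-avoiding permutations, but this seems too strong to hope for uniformly in $\sigma$, because $\sigma$ and $\sigma^\star$ share little structural commonality in general. A more realistic route is an analytic comparison, showing that the inclusion–exclusion corrections coming from genuine multi-entry overlaps are dominated, near $x=1/\rho_{\sigma^\star}$, by the leading single-cluster contribution; such a comparison is plausible for broad subclasses of $\sigma$ — matching the abstract's hedge that ``most'' patterns behave like the least-avoided one — but the full conjecture may well resist the present method and require finer, pattern-specific information about $\omega_\sigma(x)$.
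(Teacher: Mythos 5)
This statement is one the paper does \emph{not} prove: it is Nakamura's conjecture, recorded as Conjecture~\ref{conj:least avoided pattern} and attributed, for its proof, to Elizalde's generating-function/cluster-method work~\cite{e2012}. Indeed, the paper goes out of its way (end of Section~\ref{sec:LB}) to explain why its own probabilistic machinery cannot establish it. Your proposal runs into exactly that wall, and you concede as much in your final paragraph, so what you have is a plan with a genuine gap rather than a proof. Concretely: the lower-bound tool available in this framework (the one-sided Local Lemma, or any Janson/Suen-type lower estimate on $\Pr(X=0)$) uses only the dependency graph --- which is the same circulant graph of degree less than $2m$ for \emph{every} pattern --- together with the marginals $\Pr(A_i)=1/m!$, which are also pattern-independent. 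The pattern $\sigma$ enters only through the joint probabilities $\Pr(A_i\cap A_j)$, and the lower-bound side of the method is blind to them. Hence the best it yields is the uniform bound $\rho_\sigma\geq 1-\frac{1}{m!}-O\bigl(\frac{m-1}{(m!)^2}\bigr)$ of Theorem~\ref{thm:LB}, whose error term is of the same order as the gap $\rho_\sigma-\rho_{\sigma^\star}$ you would need to control; a term-by-term comparison with $\sigma^\star=(12\dots m-2,m,m-1)$ therefore cannot determine the sign of the difference, no matter how the clusters of $\sigma^\star$ are organized.

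Your preliminary observations are fine as far as they go: $\sigma^\star$ has no overlap at any $k\geq 2$ (its trailing descent cannot match an increasing prefix), and the cluster expansion for such a non-overlapping pattern gives $\rho_{\sigma^\star}$ as the reciprocal root of an explicit equation, which is how the paper extracts $\rho_{\sigma^\star}\leq 1-\frac{1}{m!}-O\bigl(\frac{m-1}{(m!)^2}\bigr)$ and shows Theorem~\ref{thm:LB} is tight. But these facts only locate $\rho_{\sigma^\star}$ near the generic lower bound; they do not supply, for an arbitrary $\sigma$ with nontrivial overlaps, a lower bound on $\rho_\sigma$ sharp enough to clear $\rho_{\sigma^\star}$. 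The alternating cluster corrections you mention are precisely the pattern-specific information that the probabilistic method discards, and handling them is the content of Elizalde's proof. To close the gap you would have to either inject that cluster-level analysis back in (at which point you are reproving \cite{e2012}, not simplifying it) or find a comparison/injection argument of a kind the paper explicitly says it cannot see how to produce.
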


Both conjectures have been recently proved by Elizalde in~\cite{e2012}. The
proofs are based on computing the generating function for the number of $\sigma$-avoiding
permutations, $P_\sigma(z)=\sum \alpha_n(\sigma)
\frac{z^n}{n!}$, combined with the cluster method of Goulden and Jackson~\cite{gj1983}.


Here we will use a complete different approach to the consecutive pattern avoiding problem through
the so called probabilistic method. While this approach is not as precise as the generating
function technique, it is simpler. This means that it provides more direct proofs of some existing
results, such as the CMP conjecture, and indeed, it allows us to go further in some directions, as
will be seen in Section~\ref{sec:whp}.

Our first result bounds from above $\rho_{\sigma}$ when the pattern $\sigma$ is
not monotone.
\begin{theorem}\label{thm:UB}
For any $\sigma \in \S_m\setminus \{(12\dots m),(m\dots 21)\}$,
$$
\rho_{\sigma}\leq 1-\frac{1}{m!}+O\left(\frac{1}{m^2 m!}\right) \;.
$$
\end{theorem}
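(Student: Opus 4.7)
The plan is to use the probabilistic method to lower bound the probability that a uniformly random $\pi\in\S_n$ contains $\sigma$ somewhere. The single structural input, valid precisely because $\sigma$ is not monotone, is the mutual exclusion of consecutive windows: for every $1\le i\le n-m$ the events $A_i=\{\st(\pi_i,\dots,\pi_{i+m-1})=\sigma\}$ and $A_{i+1}$ are disjoint. Indeed, if both held, reading the $m-1$ overlapping elements from each window would force $\st(\sigma_1,\dots,\sigma_{m-1})=\st(\sigma_2,\dots,\sigma_m)$; applying this comparison to successive pairs propagates the sign of $\sigma_1-\sigma_2$ to every consecutive difference, which makes $\sigma$ monotone, contradicting our hypothesis.

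Given this, I would partition $\{1,\dots,n\}$ into $\lfloor n/s\rfloor$ disjoint super-blocks of size $s=s(m)$ (polynomial in $m$, to be optimized). Because super-blocks are position-disjoint, events $A_i$ lying entirely inside distinct super-blocks are independent, so
\[
\frac{\alpha_n(\sigma)}{n!}\le\left(\frac{\alpha_s(\sigma)}{s!}\right)^{n/s},
\]
and it suffices to bound $\alpha_s(\sigma)/s!$ tightly. Applying second-order Bonferroni to the $s-m+1$ events $A_i$ inside one super-block,
\[
\frac{\alpha_s(\sigma)}{s!}\le 1-\frac{s-m+1}{m!}+\sum_{i<j}\mathbb{P}(A_i\cap A_j).
\]
The mutual exclusivity kills the $j-i=1$ contribution. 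For distance $d\in\{2,\dots,m-1\}$, a cluster-counting argument gives $\mathbb{P}(A_i\cap A_{i+d})\le C_\sigma(d)/(m+d)!$, where $C_\sigma(d)$ counts permutations of $m+d$ elements in which two $d$-shifted copies of $\sigma$ both occur. The key quantitative gain is that the first surviving denominator is $(m+2)!=(m+1)(m+2)\,m!$, so, under a uniform polynomial bound on $C_\sigma(d)$, summing over $d\ge 2$ contributes $O(s/(m^2\,m!))$. Pairs at distance $d\ge m$ are independent and contribute $O(s^2/(m!)^2)$ in total.

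Choosing $s$ of order $m^3$ makes both pair contributions dominated by the first-order term $(s-m+1)/m!$ by a relative factor $O(1/m^2)$, yielding $\alpha_s(\sigma)/s!\le 1-(1-O(1/m^2))(s-m+1)/m!$. Taking the $s$-th root and Taylor-expanding,
\[
\rho_\sigma\le\exp\!\left(-\frac{1-(m-1)/s}{m!}\bigl(1-O(1/m^2)\bigr)\right)=1-\frac{1}{m!}+O\!\left(\frac{1}{m^2\,m!}\right),
\]
where the $O(1/(m^2 m!))$ error absorbs the block-boundary loss $(m-1)/(s\,m!)$ and the Bonferroni correction. The main obstacle is the uniform polynomial bound on the cluster counts $C_\sigma(d)$ across all non-monotone $\sigma$: certain patterns have strong self-overlap at intermediate shifts, so one needs a careful count of the linear extensions of the partial order forced by two overlapping copies of $\sigma$, independent of the specific overlap structure. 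That uniformity, combined with the optimization of $s$, is the technical heart of the argument.
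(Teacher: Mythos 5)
Your scaffolding is sound and genuinely different from the paper's: instead of Suen's inequality you use submultiplicativity of $\alpha_n(\sigma)/n!$ across position-disjoint blocks of size $s\sim m^3$ plus a second-order Bonferroni bound inside one block, and your observation that non-monotonicity forces $\Pr(A_i\cap A_{i+1})=0$ is exactly the paper's Lemma~\ref{lem:monotone}, proved essentially the same way. Granting your pairwise estimates, the optimization over $s$ and the final Taylor expansion do deliver the stated bound, so this elementary route would work as a substitute for Suen.

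However, there is a genuine gap at precisely the point you flag as ``the technical heart'': the uniform bound on the cluster counts $C_\sigma(d)$ for $2\le d\le m-1$ is assumed, not proved, and nothing easier will do. The naive estimate obtained from disjoint sub-windows (the event $A_{i+d}$ forces the relative order of $\pi_{i+m+1},\dots,\pi_{i+m+d}$, which is independent of $A_i$) only gives $\Pr(A_i\cap A_{i+d})\le \frac{1}{d!\,m!}$, and summing over $d\ge 2$ yields $\Theta(1/m!)$ — an error term of order $1/m!$, not $O(1/(m^2 m!))$, which would wreck the final bound and in particular could not beat the monotone pattern. What is actually needed is the content of the paper's Lemmas~\ref{lemma:fixed} and~\ref{lem:bound1}: when two copies of $\sigma$ occur at distance $d=m-k$, the \emph{values} of the $k$ overlapped entries inside the window of length $2m-k$ are completely determined (no linear-extension count is required — the overlap is rigid), so the number of admissible $\tau\in\S_{2m-k}$ is at most $\binom{2d}{d}$, giving $\Pr(A_i\cap A_{i+d})\le \binom{2d}{d}/(m+d)!$, whose sum over $d\ge 2$ is dominated by the $d=2$ term $O\left(\frac{1}{m^2\,m!}\right)$. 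Without this forced-overlap argument your ``uniform polynomial bound on $C_\sigma(d)$'' is an unproven assertion, so the proposal as written does not establish the theorem.
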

The proof of this and all the following results, have a probabilistic flavor. We set out the problem
through the Poisson Paradigm (see e.g.~\cite{as2008}) which asserts that, in a probability space, events that are nearly
independent should behave similar as if they were so. To prove this theorem we make use of the
Suen's Inequality~\cite{s1990}, a powerful tool that provides an upper bound on the probability that
some events do not happen at the same time. 

Theorem~\ref{thm:UB} can be extended to the whole set of patterns, $\S_m$, by weakening the upper
bound: for any $\sigma\in\S_m$,
$$
\rho_{\sigma}\leq 1-\frac{1}{m!}+O\left(\frac{1}{m\cdot m!}\right) \;,
$$
however, this bound is not strong enough to prove the CMP conjecture.
From the results given in~\cite{en2012}, one can derive a lower bound on $\rho_{(12\dots m)}$ to
show that the CMP conjecture holds for any large enough $m$. 
A more careful analysis on the constants hidden inside the asymptotic notation shows that it is
enough to consider $m\geq 5$.

The second part of the article is devoted to give a general lower bound on $\rho_{\sigma}$ when
$\sigma\in \S_m$.
\begin{theorem}\label{thm:LB}
For any $\sigma \in \S_m$,
$$
\rho_{\sigma}\geq 1-\frac{1}{m!}-O\left(\frac{m-1}{(m!)^2}\right)\;.
$$
\end{theorem}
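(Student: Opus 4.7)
The plan is to use the Lov\'asz Local Lemma (LLL) as the natural counterpart, on the lower-bound side, to the Suen's Inequality argument that drives Theorem~\ref{thm:UB}. Let $\pi$ be a uniformly random permutation in $\S_n$ and, for each $i\in\{0,1,\dots,n-m\}$, define the bad event $A_i=\{\st(\pi_{i+1},\dots,\pi_{i+m})=\sigma\}$. Then $\Pr[A_i]=1/m!$ and $\alpha_n(\sigma)/n! = \Pr\left[\bigcap_i \bar A_i\right]$, so the target bound on $\rho_\sigma$ will follow from a lower bound of the form $(1-1/m!-O((m-1)/(m!)^2))^{n-m+1}$ on this avoidance probability, after taking the $n$-th root and letting $n\to\infty$.

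The first step is to pin down the dependency structure. A convenient realisation is to sample i.i.d.\ $\mathrm{Uniform}[0,1]$ variables $X_1,\dots,X_n$ and let $\pi$ be their standardization; then $A_i$ is measurable with respect to $X_{i+1},\dots,X_{i+m}$, so $A_i$ is mutually independent of the entire family $\{A_j : |i-j|\geq m\}$. Hence the dependency degree in the LLL graph is at most $d=2(m-1)$.

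The main step is the symmetric LLL: it suffices to exhibit $x\in(0,1)$ with $1/m!\leq x(1-x)^{2m-2}$, from which $\Pr[\bigcap_i \bar A_i]\geq (1-x)^{n-m+1}$. Using $(1-x)^{2m-2}\geq 1-(2m-2)x$, the inequality reduces to $x\geq \frac{1}{m!}+\frac{(2m-2)x^2}{1-(2m-2)x}$, and a short bootstrap starting from $x\approx 1/m!$ shows that $x=\frac{1}{m!}+\frac{C(m-1)}{(m!)^2}$ is feasible for any sufficiently large absolute constant $C$. Substituting back and taking the $n$-th root then yields the stated lower bound on $\rho_\sigma$.

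The only real obstacle is conceptual rather than computational: one must verify that $A_i$ is mutually independent of the \emph{entire} subfamily of non-adjacent $A_j$'s, not merely pairwise independent of each one. This is immediate from the i.i.d.\ uniform representation above, since the relevant $A_j$'s are jointly determined by coordinates disjoint from those governing $A_i$. The remaining verification --- the feasibility condition $ep(d+1)\leq 1$, equivalently $e(2m-1)/m!\leq 1$ --- holds for all $m\geq 4$, so the finitely many small values of $m$ can be absorbed into the asymptotic notation.
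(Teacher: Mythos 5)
Your argument is correct and proves the theorem; it follows the same probabilistic strategy as the paper (the window events $A_i=\{\st(\pi_{i+1},\dots,\pi_{i+m})=\sigma\}$, the distance-$m$ dependency structure, a Local Lemma yielding a product lower bound on $\Pr(X=0)$, then $n$-th roots), but it differs in two concrete places. First, you apply the classical two-sided Local Lemma with symmetric weights, so the relevant degree is $2(m-1)$ and your feasible weight is $x=\frac{1}{m!}+\frac{(2+o(1))(m-1)}{(m!)^2}$, whereas the paper uses the one-sided version of Peres and Schlag (Lemma~\ref{lem:OSLLL}), which conditions only on the events $A_j$ with $j<i$, works with out-degree $m-1$, and gives $x=e^{(m-1)/m!}/m!=\frac{1}{m!}+\frac{(1+o(1))(m-1)}{(m!)^2}$; both constants are absorbed by the $O\left(\frac{m-1}{(m!)^2}\right)$ term, so nothing is lost at the level of the statement, the one-sided lemma merely buys a better constant. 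Second, the two-sided lemma requires $A_i$ to be mutually independent of the whole family of non-neighbouring events, and your verification via the representation of $\pi$ as the standardization of i.i.d.\ uniform random variables is a clean substitute for the paper's Lemma~\ref{lem:indep}: each $A_j$ with $|i-j|\ge m$ is measurable with respect to coordinates disjoint from those determining $A_i$, so the required mutual (not merely pairwise) independence is immediate. Your feasibility bootstrap is also sound: your rearranged condition does imply $x\left(1-(2m-2)x\right)\ge\frac{1}{m!}$, hence $x(1-x)^{2m-2}\ge\frac{1}{m!}$, it is satisfied by $x=\frac{1}{m!}+\frac{C(m-1)}{(m!)^2}$ for any fixed $C>2$ and $m$ large, and the finitely many small $m$ are absorbed into the asymptotic notation.
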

To prove this lower bound we use a one--sided version of the Lov\'asz Local
Lemma~(see~\cite{ps2010}). This bound is asymptotically tight and an extremal example is provided
by the pattern $(12\dots m-2,m,m-1)$.
Unlike in the case of the upper bound and the CMP conjecture, the proof of
Theorem~\ref{thm:LB} can not be adapted to extract a proof of Conjecture~\ref{conj:least avoided
pattern}.

As Theorem~\ref{thm:UB} and Theorem~\ref{thm:LB} give bounds for the value of
$\rho_{\sigma}$ in terms of $m$, a natural question is to determine how do most of
the patterns behave. In this direction a much stronger upper bound, close to the general lower
bound, is shown to hold for most of the patterns.
\begin{theorem}\label{thm:whp} 
Let $\sigma\in \S_m$ chosen uniformly at random. Then, for any $2\leq k\leq m/2$, 
$$
\rho_{\sigma}\leq 1-\frac{1}{m!} +O\left(\frac{4^{m}}{\left(m-k\right)!m!}\right)\;,
$$
with probability at least $1-\frac{2}{(k+1)!}-m2^{-m/2}$.
\end{theorem}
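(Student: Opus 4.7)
The plan is to sharpen the Suen's Inequality argument used in the proof of Theorem~\ref{thm:UB} by exploiting the fact that a uniformly random pattern $\sigma\in\S_m$ is ``nearly non-overlapping'' with very high probability. Reusing that setup, consider a uniformly random $\pi\in\S_n$ and define the events $E_i=\{\st(\pi_{i+1},\dots,\pi_{i+m})=\sigma\}$ for $0\leq i\leq n-m$, so that $\Pr(E_i)=1/m!$ and $\alpha_n(\sigma)/n!=\Pr(\bigcap_i\overline{E_i})$. The Suen correction is controlled by $\Delta=\sum_i\sum_{1\leq j\leq m-1}\Pr(E_i\cap E_{i+j})$, and the crucial observation is that $\Pr(E_i\cap E_{i+j})$ vanishes unless $\sigma$ admits a self-overlap at shift $j$, meaning $\st(\sigma_1,\dots,\sigma_{m-j})=\st(\sigma_{j+1},\dots,\sigma_m)$; when such an overlap exists, $\Pr(E_i\cap E_{i+j})=1/(m+j)!$.

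The first step explains the $2/(k+1)!$ part of the failure probability. For a fixed shift $j$ with overlap length $\ell=m-j\geq k+1$, the probability that a uniform $\sigma$ admits a self-overlap at shift $j$ is at most $1/\ell!$, because the condition prescribes an equality between two standardizations of size $\ell$. A union bound gives
\[
\Pr\bigl(\sigma\text{ has a self-overlap of length}\geq k+1\bigr)\;\leq\;\sum_{\ell=k+1}^{m-1}\frac{1}{\ell!}\;\leq\;\frac{2}{(k+1)!}\;.
\]
Conditional on the complementary good event, only shifts $j\in\{m-k,\dots,m-1\}$ can contribute to $\Delta$, so
\[
\Delta\;\leq\;n\sum_{j=m-k}^{m-1}\frac{1}{(m+j)!}\;\leq\;\frac{2n}{(2m-k)!}\;\leq\;\frac{2n\cdot 4^{m}}{(m-k)!\,m!}\,,
\]
where the last step uses the trivial bound $\binom{2m-k}{m-k}\geq 1$ combined with the identity $(2m-k)!=\binom{2m-k}{m-k}(m-k)!\,m!$. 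Plugging $\mu=(n-m+1)/m!$ and this $\Delta$ into Suen's inequality, and taking $n$-th roots in the resulting bound on $\alpha_n(\sigma)/n!$, yields the desired estimate $\rho_\sigma\leq 1-1/m!+O(4^m/((m-k)!\,m!))$.

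The main technical obstacle is the second failure term $m2^{-m/2}$. Suen's Inequality comes with an auxiliary local-degree factor of the form $\exp(2\delta)$ multiplying $\Delta$, and for the $\Delta$-bound above to actually translate into the claimed correction one has to guarantee $\exp(2\delta)=1+o(1)$. This requires controlling the total weighted overlap of $\sigma$, namely $\sum_j\mathbf{1}[\sigma\text{ self-overlaps at shift }j]/(m+j)!$, and showing that it is exceptionally small for all but a $2^{-m/2}$ fraction of patterns; here the restriction $k\leq m/2$ should enter. I would expect to establish this via a Chernoff-type concentration inequality for the overlap count of a random permutation, with a union bound over the $O(m)$ relevant shifts producing the leading factor of $m$. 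Identifying the correct concentration statement, and verifying that the higher-order cluster terms in Suen's inequality remain dominated by $\Delta$ on this good set of patterns, is where the bulk of the technical work lies.
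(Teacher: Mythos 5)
Your overall architecture matches the paper's: condition on the random pattern having no overlap longer than $k$, bound $\Delta$ for such patterns via the joint-occurrence estimate, and feed this into Suen's inequality together with $\mu=(n-m+1)/m!$. But there are two genuine gaps. First, your union bound asserts that a uniformly random $\sigma$ self-overlaps at length $\ell$ with probability at most $1/\ell!$ for every $\ell\geq k+1$, ``because the condition prescribes an equality between two standardizations of size $\ell$''. This is only justified when $2\ell\leq m$: then the windows $(\sigma_1,\dots,\sigma_\ell)$ and $(\sigma_{m-\ell+1},\dots,\sigma_m)$ are disjoint, so their standardizations are independent and uniform (Lemma~\ref{lem:indep}) and the probability is exactly $1/\ell!$. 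When $\ell>m/2$ the two windows intersect, independence fails, and the bound $1/\ell!$ is not established --- the paper explicitly leaves it as a conjecture. The paper treats long overlaps differently: it cuts $\sigma$ into $\lfloor m/(m-\ell)\rfloor\geq 2$ disjoint blocks of length $m-\ell$, whose standardizations are independent and must all coincide, giving $\Pr(\sigma\in\N_\ell)\leq 2^{-m/2+1}$ (with $\ell=m-1$ handled by Lemma~\ref{lem:monotone}); summing over the at most $m/2$ long shifts is exactly what produces the $m2^{-m/2}$ term in the failure probability. So that term comes from the overlap statistics of $\sigma$, not from Suen's local factor.

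Second, and relatedly, your ``main technical obstacle'' is aimed at the wrong place: $\delta$ is deterministic. Every event has probability $1/m!$ and the dependency graph has maximum degree $2(m-1)$, so $\delta\leq 2(m-1)/m!\leq 2/(m-1)!$ for every pattern and $e^{2\delta}\leq 2$ once $m\geq 4$; no concentration inequality over random $\sigma$ is needed, and the restriction $k\leq m/2$ enters only through the overlap estimate above. A smaller inaccuracy: when $\sigma$ overlaps at shift $j$, the joint probability is not $1/(m+j)!$ in general; the counting in Lemmas~\ref{lemma:fixed} and~\ref{lem:bound1} only gives the upper bound $\binom{2j}{j}/(m+j)!$ (for instance $\sigma=(1,3,2)$ with $j=2$ gives $3/5!$), which is why the paper's bound on $\Delta$ carries the factor $4^{m-k}$. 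Your final estimate survives this because the claimed error term already allows a $4^m$, but the equality as you state it is false, and the unproved $1/\ell!$ claim plus the misplaced role of $m2^{-m/2}$ leave the argument incomplete as written.
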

This theorem shows that most of the patterns behave similar to the least avoided one.
The idea behind this result is that the number of permutations avoiding a pattern depends on the
maximum overlapping position of this pattern. It can be shown that almost all patterns do not have
a large overlap and thus, they are far from the upper bound attained by monotone patterns, the
ones with maximum overlap.

This paper is organized as follows. In Section~\ref{sec:UB}, Theorem~\ref{thm:UB} is proven. A
lower bound on $\rho_{(12\dots m)}$ is derived in
Section~\ref{sec:monotone} completing the proof of the CMP conjecture. Section~\ref{sec:LB} is
devoted to the proof of Theorem~\ref{thm:LB}. Finally, in
Section~\ref{sec:whp} we provide the proof of
Theorem~\ref{thm:whp}.

\section{An upper bound on $\rho_{\sigma}$}\label{sec:UB}

Consider the set of events $\A=\{A_1,\dots,A_N\}$ with associated indicator random variables
$X_1,\dots,X_N$ and let $X=\sum_{i=1}^N X_i$. In general, the events in $\A$ will be considered to
be bad and the aim is to bound, either from above or from below, the probability that none of these
bad events occurs. We will denote by $\mu$ the expected number of bad
events, this is, $\mu= \mathbb{E}(X) = \sum_{i=1}^N \Pr(A_i)$.

A \emph{dependency graph} of $\A$ is a graph $H$ with vertex set
$V(H)=\{1,\dots,N\}$ where
if two disjoint subsets $S,T\subseteq [N]$ share no edges then $\{A_i\}_{i\in S}$ and
$\{A_j\}_{j\in T}$ are independent. 

Two parameters are defined to control the dependencies
among all events. To measure the global effect of the dependencies, consider $$\Delta=\sum_{ij\in
E(H)}\Pr(A_i\cap A_j)\;,$$ and for the local
one, $$\delta=\max_{1\leq i\leq N}\sum_{j:\,ij\in E(H)}\Pr(A_j)\;,$$
where $E(H)$ denotes the edge set of $H$.

We will use the following version of Suen's inequality (see e.g. Theorem~2 in~\cite{j1998}),
\begin{theorem}[Suen's inequality] \label{thm:suen}
With the above notation,
\begin{eqnarray}\label{eq:SuenUB2}
\Pr(X=0)=\Pr(\cap \overline{A_i}) \leq \exp\left\{ -\left(1-\frac{\Delta
e^{2\delta}}{\mu}\right)\mu\right\}.
\end{eqnarray}
\end{theorem}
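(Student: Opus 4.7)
The plan is to prove Suen's inequality by a chain-rule decomposition, reducing the joint probability $\Pr(\bigcap_i \overline{A_i})$ to a product of conditional probabilities and then using the dependency graph $H$ to control each factor. Fix an arbitrary ordering of the events and set $B_i = \bigcap_{j<i} \overline{A_j}$. Then
$$
\Pr\!\left(\bigcap_{i=1}^N \overline{A_i}\right) = \prod_{i=1}^N \Pr(\overline{A_i} \mid B_i) \leq \exp\!\left(-\sum_{i=1}^N \Pr(A_i \mid B_i)\right),
$$
by $1-x\leq e^{-x}$. It would therefore suffice to establish the per-event lower bound
$$
\Pr(A_i \mid B_i) \geq \Pr(A_i) - e^{2\delta}\sum_{\substack{j<i\\ ij \in E(H)}} \Pr(A_i \cap A_j),
$$
since summing over $i$ yields $\sum_i \Pr(A_i \mid B_i) \geq \mu - e^{2\delta}\Delta$ and a final rearrangement produces the stated inequality in the theorem.

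The next step is to prove the per-event bound using the dependency graph property. Let $N_i = \{j < i : ij \in E(H)\}$. By definition of $H$, the event $A_i$ is independent of the sigma-algebra generated by $\{A_k : k<i,\ k \notin N_i\}$, so conditioning on those non-adjacent $\overline{A_k}$ factors cleanly out of $\Pr(A_i \mid B_i)$. One is then left to estimate the difference between $\Pr(A_i)$ and its value conditional on $\bigcap_{k \in N_i} \overline{A_k}$, which can be done by inclusion-exclusion on the union $\bigcup_{j \in N_i} A_j$. The leading correction is exactly $\sum_{j \in N_i}\Pr(A_i \cap A_j)$, up to a multiplicative factor arising from the conditioning that has not yet been eliminated.

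The main obstacle is controlling that multiplicative factor. One needs to bound ratios of the form $\Pr(A_i \cap A_j \mid \cdot)/\Pr(A_i \cap A_j)$, where the conditioning involves $\overline{A_k}$ for $k$ adjacent in $H$ either to $i$ or to $j$. Following Janson's iterative unconditioning scheme, each layer of conditioning is peeled off at a multiplicative cost of at most $e^{\delta}$: once for the neighbourhood of $i$ and once for the neighbourhood of $j$, yielding exactly the combined factor $e^{2\delta}$ appearing in the theorem. This ratio bound is the technical core of the argument; once it is in hand the remaining work is the algebraic assembly sketched in the first paragraph, plugging the per-event bound into the product representation and identifying the sums with $\mu$ and $\Delta$.
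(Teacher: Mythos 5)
The paper itself offers no proof of this statement: Suen's inequality is quoted from Janson~\cite{j1998} (Theorem~2 there) and used as a black box, so your sketch has to stand entirely on its own, and it does not. The outer reduction is correct algebra: writing $\Pr(\bigcap_i\overline{A_i})=\prod_i\Pr(\overline{A_i}\mid B_i)\leq\exp\bigl(-\sum_i\Pr(A_i\mid B_i)\bigr)$ and noting that the per-event bound $\Pr(A_i\mid B_i)\geq\Pr(A_i)-e^{2\delta}\sum_{j<i,\,ij\in E(H)}\Pr(A_i\cap A_j)$ would, after summing, give exactly $\exp\{-(1-\Delta e^{2\delta}/\mu)\mu\}$. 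But that per-event bound \emph{is} Suen's inequality in all but name, and your argument for it has two genuine gaps.

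First, the claim that the non-adjacent events ``factor cleanly out'' of $\Pr(A_i\mid B_i)$ is false as stated. The dependency-graph property gives independence of $A_i$ from the family $\{A_k:k<i,\ k\notin N_i\}$, but $B_i$ also contains the neighbour events $\overline{A_j}$, $j\in N_i$, and those neighbours may themselves be adjacent to non-neighbours of $i$; the hypothesis of the definition (no edges between the two index sets) fails for $S=\{i\}\cup N_i$ and $T=\{k<i:\ k\notin N_i\}$, so no factorization of the conditional probability follows, and one must instead work with joint probabilities and compare intersections directly. Second, the assertion that each neighbourhood of conditioning can be ``peeled off at a multiplicative cost of at most $e^{\delta}$'' is precisely the technical core of the theorem, and you give no argument for it: a naive peeling of a single event $\overline{A_k}$ costs a factor $\bigl(1-\Pr(A_k\mid\cdots)\bigr)^{-1}$ whose conditioning still involves the remaining complements, so turning this into an $e^{\Pr(A_k)}$-type factor requires an induction over the events (this is what Janson's iterative scheme actually carries out) or some substitute argument you have not supplied; invoking the scheme by name is citing the proof you are meant to give. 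As it stands, the proposal is a correct organizational outline with the decisive estimate assumed rather than proved.
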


Suen's inequality bounds from above the probability of having no bad events in terms of the expected
number of events, but also takes into account the pairwise dependence of the events. Thus, if the
dependencies among the events are weak or unlikely, we will be able to give a meaningful upper bound
on such probability.

Let $\pi\in\S_n$ chosen uniformly at random, and let $\sigma\in \S_m$ be a fixed pattern. For any
$0\leq i\leq n-m$ we define the event $A_i:=\{\st(\pi_{i+1},\dots,\pi_{i+m})=\sigma\}$. Then $\pi$
avoids $\sigma$ as a consecutive pattern if and only if $X=0$, this is, no copy
of the pattern $\sigma$ appears. By computing the probability of this event, 
$$
\alpha_n(\sigma)=\Pr(X=0)n!\;,
$$
where $\Pr(X=0)$ depends on $\sigma$. In particular we will be interested in 
\begin{eqnarray}\label{eq:relation}
\rho_{\sigma}= \lim_{n\to \infty} \Pr(X=0)^{1/n}\;. 
\end{eqnarray}

Bounding from above the number of edges in a dependency graph $H$ is crucial in order to give a
proper upper bound on the probability that any of the events happens at the same time.
The following lemma shows that there are many pairs of sets of events
that share no edges.
\begin{lemma}\label{lem:indep}
 Let  $S,T\subseteq \{0,1,\dots, n-m\}$ be two disjoint subsets of indexes such that for any
$i\in S$ and any $j\in T$, we
have $|i-j|\geq m$. Then,
the events $\{A_i\}_{i\in S}$ and $\{A_j\}_{j\in T}$ are independent.
\end{lemma}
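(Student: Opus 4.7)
The plan is to observe that each event $A_i$ depends only on the relative order of $\pi$ on the window $I_i := \{i+1, \dots, i+m\}$, and that the hypothesis $|i-j| \geq m$ for all $i \in S$ and $j \in T$ is precisely what forces every $I_i$ with $i\in S$ to be disjoint from every $I_j$ with $j\in T$. Setting $U := \bigcup_{i \in S} I_i$ and $V := \bigcup_{j \in T} I_j$, this gives $U \cap V = \emptyset$, even though windows inside $S$ (or inside $T$) may well overlap.

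The main step will be to prove that for $\pi$ drawn uniformly from $\S_n$ and any disjoint $U, V \subseteq \{1, \dots, n\}$, the standardizations $\st(\pi|_U)$ and $\st(\pi|_V)$ are independent, each uniform on the corresponding symmetric group. I would show this by sampling $\pi$ in two stages: first, choose a uniform ordered partition of $\{1, \dots, n\}$ into three value blocks of sizes $|U|$, $|V|$ and $n-|U|-|V|$, destined for the position sets $U$, $V$ and $[n]\setminus(U\cup V)$ respectively; then, conditional on this choice, independently sample a uniform bijection from each position set onto its assigned value block. A uniform ordering of any $k$ distinct reals has a standardization that is uniform on $\S_k$ regardless of the underlying values, so $\st(\pi|_U)$ depends only on the second-stage bijection associated with $U$, and similarly for $V$; independence of those bijections then forces independence of the two standardizations.

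Finally, every $A_i$ with $i \in S$ is determined by $\st(\pi|_U)$ alone, and every $A_j$ with $j \in T$ by $\st(\pi|_V)$ alone, so the two families of events are independent. The only mildly subtle point is verifying the two-stage sampling: that it really produces a uniform permutation, and, more importantly, that the conditional distribution of the internal standardization on a block is genuinely free of which specific values landed in those positions. Everything else is either definitional or an immediate consequence.
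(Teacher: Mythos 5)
Your proof is correct and takes essentially the same route as the paper's: both arguments reduce the claim to the fact that each family of events is determined by the relative order (standardization) of $\pi$ on the union of its windows, and that these relative orders on disjoint position sets are independent and uniform. Your two-stage sampling construction simply makes explicit the step the paper states informally, namely that conditioning on the events over $T$ may constrain which values occupy the positions in $\supp(S)$ but not their relative order.
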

\begin{proof}
For any two disjoint sets $U^+,U^-\subseteq \{0,1,\dots, n-m\}$, define the event 
$$
A_{U^+,U^-} :=\left\{\bigcap_{i\in U^+} A_i
\wedge \bigcap_{i\in U^-} \overline{A_i}\right\}\;.
$$
It suffices to show that for any two disjoint subsets $S^+,S^-\subseteq S$ and $T^+,
T^-\subseteq T$,
\begin{eqnarray}\label{eq:1}
\Pr\left( A_{S^+,S^-}\mid A_{T^+,T^-} \right)= \Pr\left( A_{S^+,S^-}\right)\;.
\end{eqnarray}


We say that $j\in \{0,1,\dots, n-m\}$ belongs to the support of $S$, $\supp(S)$, if there
exists $i\in S$ such that $0\leq j-i \leq m-1$. Observe that $\supp(S)\cap \supp(T)=\emptyset$, by the assumptions on $S$ and $T$.
Clearly, the event $A_{S^+,S^-}$ is determined by the elements appearing in the positions indexed by
$\supp(S)$.

Denote by $\T \subseteq \S_n$ the subset of permutations of length $n$ that satisfies $A_{T^+,T^-}$.
Choose $\tau\in \T$ uniformly at random. It is enough to consider $\tau$ restricted on $\supp(S)$,
$\tau'$, and show that its standardization, $\st(\tau')$, is uniformly distributed in
$\S_{|\supp(S)|}$. 

The key observation is that $A_{T^+,T^-}$ might condition which elements lie in
$\supp(S)$ but does not impose anything on their order. The event $A_{T^+,T^-}$ makes no direct
restriction affecting the order of the elements in $\supp(S)$. Therefore, the elements appearing in
$\tau'$ may be conditioned by $A_{T^+,T^-}$, but $\st(\tau')$ is not affected by $A_{T^+,T^-}$.
Since $A_{S^+,S^-}$ is satisfied in $\tau'$ if and only if, it is satisfied in $\st(\tau')$ (with
the corresponding relabeling), equation~\eqref{eq:1} holds.
\end{proof}

The previous lemma suggests that a good dependency graph for the set of events $\A$ is the
circulant
graph $H$ with vertex set $V(H)=\{0,1,\dots,n-m\}$, where $ij\in E(H)$ if and only if $0<|i-j|<m$.
Throughout the paper, we will use the former circulant graph as a dependency graph of $\A$.

A simple upper bound follows directly from the previous observation. Consider $I= \{km:\, 0\leq
k<n/m\}$, then
$$
\Pr(X=0)=\Pr\left(\bigcap_{i=0}^{n-m} \overline{A_i}\right)  \leq \Pr\left(\bigcap_{i\in I}
\overline{A_i}\right)
=\prod_{i\in I} \left(1-\Pr\left(A_i\mid \bigcap_{j\in I,j<i} \overline{A_j}  \right)\right)\;.
$$
By using Lemma~\ref{lem:indep} with $S=\{i\}$ and $T= \{j:\;j\in I,j<i\}$,
$$
1-\Pr\left(A_i\mid \bigcap_{j\in I,j<i} \overline{A_j}  \right) =
1-\Pr\left(A_i\right)=
1-\frac{1}{m!}\;.
$$
Since $|I|\leq n/m$, this implies 
$$
\rho_{\sigma}\leq \left(1-\frac{1}{m!}\right)^{1/m} = 1-O\left(\frac{1}{m\cdot m!}\right)\;.
$$
However, a  better bound is given in Theorem~\ref{thm:UB} by taking into account the
interaction between pairs of dependent events.

A pattern $\sigma\in \S_m$ has an \emph{overlap at $k$} if
$\st(\sigma_{1},\dots,\sigma_{k})=\st(\sigma_{m-k+1},\dots,\sigma_{m})$, this is, the first and the
last $k$ positions have the same relative order. If a pattern does not have an overlap at $k$, then
\begin{eqnarray}\label{eq:incompatible}
\Pr(A_i\cap A_{i+m-k})=0\;.
\end{eqnarray}

For any $1 \leq k\leq m-1$, define the set $\M_k\subseteq \S_m$ as the set of patterns of length $m$
that have no overlap larger than $k$. The elements in $\M_1$
are called \emph{non-overlapping} patterns. They have been enumerated in~\cite{b2012} and also
extensively studied in~\cite{e2012}.

Observe that $\M_{m-1}$ is the whole set of patterns of length $m$.
One of the crucial facts to prove Theorem~\ref{thm:UB} is to show that $\M_{m-1}\setminus\M_{m-2}$,
the set of patterns that have an overlap at $m-1$, only consists of the monotone patterns.
\begin{lemma}\label{lem:monotone}
 For any $m\geq 3$,
$$
\M_{m-1}\setminus\M_{m-2}= \{(12\dots m),(m\dots 21)\}\;.
$$
\end{lemma}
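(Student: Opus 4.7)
The backward inclusion is a one-line check: for $\sigma = (12\dots m)$ both $(\sigma_1, \dots, \sigma_{m-1}) = (1,\dots,m-1)$ and $(\sigma_2, \dots, \sigma_m) = (2,\dots,m)$ standardize to $(1,\dots,m-1)$, and the reverse pattern $(m\dots 21)$ is handled identically. So all the content lies in the forward inclusion.

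For the forward direction, the plan is to exploit the elementary fact that two sequences of equal length have the same standardization if and only if they induce the same relative order on every pair of positions. Applied to $\st(\sigma_1,\dots,\sigma_{m-1}) = \st(\sigma_2,\dots,\sigma_m)$, this is equivalent to the biconditional
\begin{equation*}
\sigma_i < \sigma_j \iff \sigma_{i+1} < \sigma_{j+1}, \qquad 1 \leq i < j \leq m-1.
\end{equation*}

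The decisive step is then to specialize to consecutive positions $j = i+1$, which collapses the whole system into the chain
\begin{equation*}
\sigma_i < \sigma_{i+1} \iff \sigma_{i+1} < \sigma_{i+2}, \qquad 1 \leq i \leq m-2.
\end{equation*}
A straightforward induction on $i$ propagates the sign of the first comparison throughout, giving either $\sigma_1 < \sigma_2 < \dots < \sigma_m$ or $\sigma_1 > \sigma_2 > \dots > \sigma_m$. Hence $\sigma$ must be one of the two monotone patterns, establishing the inclusion $\M_{m-1}\setminus \M_{m-2} \subseteq \{(12\dots m),(m\dots 21)\}$.

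I do not foresee any real obstacle; the entire argument hinges on the reformulation in terms of pairwise comparisons, and the only mild point to verify is that the specialization $j = i+1 \leq m-1$ is legitimate, which needs $m \geq 3$, precisely the hypothesis of the lemma.
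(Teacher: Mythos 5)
Your proof is correct and rests on the same key observation as the paper's: equal standardizations of $(\sigma_1,\dots,\sigma_{m-1})$ and $(\sigma_2,\dots,\sigma_m)$ force $\sigma_i<\sigma_{i+1}\iff\sigma_{i+1}<\sigma_{i+2}$, the paper merely phrasing the conclusion as a contradiction at a local peak or valley rather than as your direct induction. This is essentially the same argument.
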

\begin{proof}
 It is clear that both monotone patterns belong to $\M_{m-1}\setminus\M_{m-2}$. 
Let us show that any other
$\sigma \in \S_m \setminus \{(12\dots m),(m\dots 21)\}$ does not.
Suppose that $\sigma\in\M_{m-1}\setminus\M_{m-2}$. This implies that
\begin{eqnarray}
\label{condition for M_{m-1}}
\st(\sigma_1\dots\sigma_{m-1})=\st(\sigma_2\dots\sigma_{m})\;.
\end{eqnarray}
 Since $\sigma$ is not a monotone pattern, there exists an index $2\leq
i\leq m-1$ such that $\sigma_{i-1}>\sigma_{i}<\sigma_{i+1}$ or
$\sigma_{i-1}<\sigma_{i}>\sigma_{i+1}$.
Without loss of generality we assume the latter. Now observe that \eqref{condition for
M_{m-1}} implies that if $\sigma_{i-1}<\sigma_{i}$, then $\sigma_{i}<\sigma_{i+1}$, leading a
contradiction.
\end{proof}
Thus, we can consider that the maximum overlap of a pattern $\sigma \in\S_m \setminus \{(12\dots
m),(m\dots 21)\}$ is at most at $m-2$. This can not be improved since there are non monotone
patterns that have an overlap at $m-2$. For instance, consider $m=2t$ and
$\sigma=(1,t+1,2,t+2,\dots,t,2t)$.


The following lemma gives some insight of the structure of the permutations that contain two
given occurrences of a pattern $\sigma$. 
\begin{lemma}\label{lemma:fixed}
Let $\sigma\in \S_m$ be a pattern with an overlap at $k$ and suppose that $\tau\in\S_{2m-k}$ is such
that the events $A_0$ and $A_{m-k}$ hold. If $\sigma'=\st(\sigma_{m-k+1},\dots,\sigma_{m})$, then,
for any $0\leq i < k$, we have $\tau_{m-i}=\sigma_{k-i}+\sigma_{m-i}-\sigma'_{k-i}$.
\end{lemma}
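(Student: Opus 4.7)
The plan is to count, in three different ways, how many entries of $\tau$ are smaller than $\tau_{m-i}$. Since $\tau$ is a permutation on $\{1,\dots,2m-k\}$, the value $\tau_{m-i}$ equals exactly one more than the total number of entries strictly smaller than it.

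First I would fix $i$ with $0 \le i < k$ and set $j = m-i$, so that $j$ lies in the overlap $\{m-k+1,\dots,m\}$. I would then partition the positions $\{1,\dots,2m-k\}$ into three intervals: the left part $L_0 = \{1,\dots,m-k\}$, the overlap $O_0 = \{m-k+1,\dots,m\}$, and the right part $R_0 = \{m+1,\dots,2m-k\}$. Let $L$, $O$, $R$ denote the numbers of positions in $L_0$, $O_0$, $R_0$ respectively whose entry is smaller than $\tau_j$. The goal is to determine $L+O+R$.

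Next I would read off three rank identities. The event $A_0$ says that the relative order on $\{1,\dots,m\} = L_0 \cup O_0$ is $\sigma$, so the rank of $\tau_j$ there is $\sigma_j = \sigma_{m-i}$, giving $L+O = \sigma_{m-i}-1$. The event $A_{m-k}$ says that on $\{m-k+1,\dots,2m-k\} = O_0 \cup R_0$ the order is $\sigma$, and in that occurrence position $j$ is the $(j-(m-k))$-th, i.e.\ the $(k-i)$-th, yielding $O+R = \sigma_{k-i}-1$. Finally, the hypothesis that $\sigma$ has an overlap at $k$ means $\st(\sigma_1,\dots,\sigma_k) = \st(\sigma_{m-k+1},\dots,\sigma_m) = \sigma'$. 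In both occurrences the restriction to the overlap has the same standardization $\sigma'$, and the rank of $\tau_j$ within $O_0$ is therefore $\sigma'_{k-i}$, so $O = \sigma'_{k-i}-1$.

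Adding and subtracting these three identities gives $L+O+R = \sigma_{m-i} + \sigma_{k-i} - \sigma'_{k-i} - 1$, and hence $\tau_{m-i} = L+O+R+1 = \sigma_{k-i}+\sigma_{m-i}-\sigma'_{k-i}$, as required. The proof is essentially bookkeeping; the only place requiring care is verifying that the overlap property translates correctly into the equality of the rank of $\tau_j$ in $O_0$ being $\sigma'_{k-i}$ in both occurrences simultaneously, which is exactly what the assumption that $\sigma$ has an overlap at $k$ guarantees.
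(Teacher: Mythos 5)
Your proof is correct and follows essentially the same idea as the paper's: compare the rank of $\tau_{m-i}$ in the two occurrences of $\sigma$ and correct for the double-counted overlap via $\sigma'_{k-i}$, which is an inclusion--exclusion count of the elements smaller than $\tau_{m-i}$. Your version with the exact identities $L+O$, $O+R$, $O$ is just a slightly tidier bookkeeping of the paper's two-sided bounds, so no further comment is needed.
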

\begin{proof}
 Fix some $i< k$. By the event $A_0$, we know that $\tau_{m-i}$ should be larger than
$\sigma_{m-i}-1$
elements and smaller than $m-\sigma_{m-i}$ elements from
$(\tau_1,\dots,\tau_{m-i-1},\tau_{m-i+1},\dots, \tau_{m})$. By the event $A_{m-k}$, it is
also true that $\tau_{m-i}$ is larger than $\sigma_{k-i}-1$ and smaller than $m-\sigma_{k-i}$
elements from $(\tau_{m-k+1},\dots,\tau_{m-i-1},\tau_{m-i+1},\dots, \tau_{2m-k})$. 

Consider now the permutation $\sigma'=\st(\sigma_{m-k+1},\dots,\sigma_{m})\in \S_{k}$. Then
there are $\sigma'_{k-i}-1$ elements that are counted twice when we look at the
elements smaller than $\sigma_{m-i}$ or $\sigma_{k-i}$, and $k-\sigma'_{k-i}$ also double counted
when we look to the larger ones. Therefore
\begin{eqnarray*}
\tau_{m-i}&>& \sigma_{k-i}+\sigma_{m-i}-2- (\sigma'_{k-i}-1)\;,
\end{eqnarray*}
and
\begin{eqnarray*}
\tau_{m-i}&\leq& 2m-k-\left(m-\sigma_{k-i}+m-\sigma_{m-i}- (k-\sigma'_{k-i})\right)\;.
\end{eqnarray*}
Observing that the first inequality is strict,
$$\tau_{m-i}=\sigma_{i+1}+\sigma_{m-i}-\sigma'_{k-i}.$$
\end{proof}

Using this last lemma, we can provide an upper bound on the probability that two given occurrences
of a pattern appear.
\begin{lemma}\label{lem:bound1}
	For any $\sigma\in \S_m$ and any $1\leq k\leq m-1$,
	$$
	\Pr(A_i\wedge A_{i+m-k})\leq \frac{4^{m-k}}{\sqrt{\pi(m-k)}\cdot(2m-k)!}\;.
	$$
\end{lemma}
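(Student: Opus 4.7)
The plan is to reduce the bound to a purely combinatorial count in $\S_{2m-k}$ and then apply a standard estimate for the central binomial coefficient. The event $A_i\wedge A_{i+m-k}$ depends only on the standardization of the length-$(2m-k)$ block $(\pi_{i+1},\dots,\pi_{i+2m-k})$, which is uniform in $\S_{2m-k}$, so $\Pr(A_i\wedge A_{i+m-k})=N/(2m-k)!$, where $N$ counts permutations $\tau\in\S_{2m-k}$ realizing both $A_0$ and $A_{m-k}$. It therefore suffices to prove
$$
N\leq \binom{2(m-k)}{m-k},
$$
after which the classical inequality $\binom{2n}{n}\leq 4^n/\sqrt{\pi n}$ applied with $n=m-k$ yields the stated bound.

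If $\sigma$ has no overlap at $k$, equation~\eqref{eq:incompatible} gives $N=0$ and we are done, so assume an overlap at $k$ exists. By Lemma~\ref{lemma:fixed}, in any double occurrence $\tau$ the $k$ values $(\tau_{m-k+1},\dots,\tau_m)$ in the overlap region are pinned down to an explicit subset $M\subseteq\{1,\dots,2m-k\}$ of size $k$ that depends only on $\sigma$. We then associate to each such $\tau$ the split of $\{1,\dots,2m-k\}\setminus M$ into the left set $L=\{\tau_1,\dots,\tau_{m-k}\}$ and the right set $R=\{\tau_{m+1},\dots,\tau_{2m-k}\}$. There are exactly $\binom{2(m-k)}{m-k}$ such splits, so the desired bound $N\leq\binom{2(m-k)}{m-k}$ will follow once the assignment $\tau\mapsto(L,R)$ is shown to be injective.

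The injection claim is the only real content and is essentially automatic: once $L$ is fixed, the value set $L\cup M$ of the first window is determined, and since $\sigma$ is a permutation, the requirement $\st(\tau_1,\dots,\tau_m)=\sigma$ prescribes a unique arrangement (the $j$-th smallest element of $L\cup M$ must occupy the unique position $i$ with $\sigma_i=j$); the analogous statement for the second window fixes the ordering of $R$. The main (mild) subtlety is that many splits produce a forced arrangement that fails to place $M$ back at the overlap positions with the correct values, so those splits do not correspond to any valid $\tau$; this asymmetry is precisely why we obtain an inequality rather than equality, which is what the lemma requires.
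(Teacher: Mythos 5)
Your proposal is correct and follows essentially the same route as the paper: fix the overlap values via Lemma~\ref{lemma:fixed}, observe that choosing which $m-k$ of the remaining $2(m-k)$ values occupy the first window determines the whole permutation (the pattern conditions force a unique arrangement in each window), and bound the count by $\binom{2(m-k)}{m-k}\leq 4^{m-k}/\sqrt{\pi(m-k)}$. Your explicit injectivity framing and the remark that invalid splits explain the inequality are just a slightly more careful phrasing of the paper's counting argument.
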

\begin{proof}
If $\sigma$ does not have an overlap at $k$, $\Pr(A_i\wedge A_{i+m-k})=0$ and we are done. Thus,
assume that $\sigma$ has an overlap at $k$.

Set $\tau=\st(\pi_{i+1},\dots, \pi_{i+2m-k})$. 
Recall that $\pi\in \S_n$ has been chosen uniformly at random, which implies that $\tau$ is uniformly distributed in
$\S_{2m-k}$. Moreover, $\pi$ satisfies $A_i$ and $A_{i+m-k}$ if and only if $\tau$ satisfies $A_0$ and $A_{m-k}$.

There are $(2m-k)!$ possible candidates for $\tau$. 
We will count how many of them are such that the events $A_0$ and
$A_{m-k}$ hold. By Lemma~\ref{lemma:fixed}, we know that the elements
$(\tau_{m-k+1},\dots,\tau_m)$ are
uniquely determined by $\sigma$ and $k$. Thus, one must select a subset of $m-k$ elements among the
$2m-2k$ available ones, to construct $(\tau_{1},\dots,\tau_{m-k})$. Since $\tau$ satisfies $A_0$,
once these elements have been chosen, there is just one order such that
$\st(\tau_{1},\dots,\tau_{m})=\sigma$, and only one way to set the last $m-k$ elements of $\tau$, in
order to satisfy $A_{m-k}$.

Hence, for $\pi\in \S_n$, 
	$$
	\Pr(A_i\wedge A_{i+m-k})\leq \frac{\binom{2(m-k)}{m-k}}{(2m-k)!} \leq 
\frac{4^{m-k}}{\sqrt{\pi(m-k)}\cdot(2m-k)!}\;.
	$$
where we have used that $\binom{2a}{a}\leq\frac{4^a}{\sqrt{\pi a}}$. One can prove this last
inequality by using Stirling's approximation.
\end{proof}

Now we are able the proof the main theorem.
\begin{proof}[Proof of Theorem~\ref{thm:UB}]
First of all we compute $\mu$, $\Delta$ and $\delta$, needed to apply Suen's inequality. 
The expected number of occurrences of the pattern $\sigma$ does not depend on $\sigma$ and can be
computed as
$$
\mu = \sum_{i=0}^{n-m} \Pr(A_i) = \frac{n-m+1}{m!}\leq \frac{n}{m!}\;.
$$

Recall that by the choice of the dependency graph $H$ (inspired by Lemma~\ref{lem:indep}) two events
$A_i$ and $A_j$ share no edge if $|i-j|\geq m$. Assume that $i<j$ and
$j-i=m-k$, then by Lemma~\ref{lem:bound1},
\begin{eqnarray*}
\Pr(A_i\wedge A_j) \leq& \frac{4^{m-k}}{\sqrt{2\pi}(2m-k)!} & \text{ if }k\leq m-2\;.\\
\end{eqnarray*}
and, since $\sigma$ is not monotone, by \eqref{eq:incompatible} and Lemma~\ref{lem:monotone},
\begin{eqnarray*}
\Pr(A_i\wedge A_j) =&0 & \text{ if }k=m-1\;.
\end{eqnarray*}

Hence,
\begin{eqnarray}\label{eq:explicit_Delta}
\sum_{j=i+1}^{i+m-1} \Pr(A_i\wedge A_j) &\leq& \sum_{k=1}^{m-2} \frac{4^{m-k}}{\sqrt{2\pi}(2m-k)!}\\
&=& \left(1+\frac{4}{m+3}+
O(m^{-2})\right)\frac{16}{\sqrt{2\pi}(m+2)!} \leq \frac{17}{\sqrt{2\pi}(m+2)!}\;,\nonumber
\end{eqnarray}
for any $m$ large enough.

Then, $\Delta$ can be expressed as
$$
\Delta = \sum_{0=i}^{n-m} \sum_{j=i+1}^{i+m-1} \Pr(A_i\wedge A_j) \leq
\frac{17n}{\sqrt{2\pi}(m+2)!}\;.
$$

Since the degree of a vertex in the dependency graph $H$ is at most $2(m-1)$,
\begin{eqnarray*}
\delta&=& \max_{0\leq i\leq n-m} \sum_{j:\,ij\in E(H)} \Pr(A_j) = 2(m-1)\Pr(A_j)=
\frac{2(m-1)}{m!}\leq
\frac{2}{(m-1)!}\;.
\end{eqnarray*}
%
%

Using that $e^{2\delta}\leq e^{4/(m-1)!}\leq 2$ if $m\geq 4$, Suen's
inequality~(see~\eqref{eq:relation}) implies that for a large enough $m$
\begin{eqnarray*}\label{eq:finalUB}
\rho_{\sigma} &\leq& \exp\left(-\frac{ 1-\frac{34}{\sqrt{2\pi}(m+2)(m+1)}} {m!}\right)\\
&\leq& 1-\frac{\frac{1}{m!}-\frac{34}{\sqrt{2\pi}(m+2)(m+1)\, m!}}{1+\frac{1}{m!}}\\
&\leq&
1-\left(1-O\left(\frac{1}{m!}\right)\right)\left(\frac{1}{m!}-\frac{34}{\sqrt{2\pi}(m+2)(m+1)\,
m!}\right)\\
&\leq& 1-\frac{1}{m!}+\frac{14}{m^2\, m!}\;.
\end{eqnarray*}
for any large enough $m$. We have used that $e^{-a}\leq 1-\frac{a}{1+a}$, for any $a\geq 0$. 
\end{proof}

%

\section{A probabilistic proof of CMP conjecture}\label{sec:monotone}
%



In this section we aim to provide an alternative proof of the CMP conjecture. We do it by obtaining
a lower bound on $\rho_{(12\dots m)}$ and showing that this bound is larger than the upper bound
obtained in Theorem~\ref{thm:UB}. A recent result of Elisalde and Noy gives an
implicit expression for $\rho_{(12\dots m)}$.
\begin{theorem}[Elisalde and Noy~\cite{en2012}]
 Let $z_0=\rho_{(12\dots m)}^{-1}$, then $z_0$ is the smallest solution of $$
g(z) = \sum_{i\geq 0} \frac{z^{mi}}{(mi)!}-\sum_{i\geq 0} \frac{z^{mi+1}}{(mi+1)!}\;.
$$
\end{theorem}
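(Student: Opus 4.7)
The plan is to obtain a closed-form expression for the exponential generating function
$$P(z) \;=\; \sum_{n\geq 0}\alpha_n(12\dots m)\,\frac{z^n}{n!},$$
namely $P(z) = 1/g(z)$, and then to extract $\rho_{(12\dots m)}$ from its dominant singularity by standard analytic combinatorics.

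First I would apply the Goulden--Jackson cluster method, adapted to consecutive permutation patterns: by inclusion--exclusion over sets of marked occurrences in a permutation, one obtains a formal identity
$$P(z) \;=\; \frac{1}{1-z-\Omega(z)}, \qquad \Omega(z) \;=\; \sum_{j\geq 1}(-1)^{j+1}\omega_j(z),$$
where $\omega_j(z)$ is the EGF of clusters built out of $j$ overlapping copies of $(12\dots m)$. The structural fact that makes the monotone case tractable is the one already recorded in Lemma~\ref{lemma:fixed}: two copies of $(12\dots m)$ can overlap at any position $k\in\{1,\dots,m-1\}$, and such an overlap fuses the two blocks into a single monotone increasing sequence of length $2m-k$. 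Iterating, a $j$-cluster with overlap profile $(k_1,\dots,k_{j-1})\in\{1,\dots,m-1\}^{j-1}$ is simply the monotone increasing sequence of length $L = jm-\sum_i k_i$, and therefore contributes $z^L/L!$ to $\omega_j(z)$.

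The main step is to collapse the signed cluster sum. Collecting contributions to $z^L/L!$ across all $j$, the coefficient becomes
$$\sum_{j\geq 1}(-1)^{j+1}\bigl|\bigl\{\vec k\in\{1,\dots,m-1\}^{j-1}\colon jm-\textstyle\sum_i k_i = L\bigr\}\bigr|.$$
Using the one-step ordinary generating function $x+x^2+\dots+x^{m-1}=x(1-x^{m-1})/(1-x)$, the sum over $j$ of $(-x(1-x^{m-1})/(1-x))^{j-1}$ is geometric with sum $(1-x)/(1-x^m)$, and extracting coefficients shows that the only surviving contributions occur for $L\equiv 0$ or $L\equiv 1\pmod m$, yielding sign $+$ for $L=mi$ and sign $-$ for $L=mi+1$. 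Plugging this back in, $1-z-\Omega(z)$ collapses to precisely $\sum_{i\geq 0}z^{mi}/(mi)! - \sum_{i\geq 0}z^{mi+1}/(mi+1)! = g(z)$.

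Finally, since $\alpha_n(12\dots m)\geq 0$, the EGF $P(z)$ has nonnegative Taylor coefficients, so by Pringsheim's theorem its dominant singularity lies on the positive real axis; as $P=1/g$ is meromorphic in a neighbourhood of that point, the singularity is the smallest positive zero $z_0$ of $g$. A standard transfer theorem then gives $\alpha_n(12\dots m)/n!\sim c\,z_0^{-n}$ for some $c>0$, whence $\rho_{(12\dots m)} = z_0^{-1}$, which is equivalent to the stated identification of $z_0$. The hardest step is the middle one: establishing the alternating-sum identity that reduces the double sum over clusters to the two arithmetic-progression series in $g(z)$. The cleanest route is to sum the geometric series in $j$ \emph{before} extracting the coefficient of $x^{jm-L}$, so that the cancellations are visible at the level of the rational function $(1-x)/(1-x^m)$ rather than hidden inside a double sum.
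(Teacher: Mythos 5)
The paper does not prove this statement at all: it is imported as a black box from Elizalde and Noy~\cite{en2012}, where it is established precisely by the Goulden--Jackson cluster method~\cite{gj1983} that you outline. So your sketch is not an alternative to an argument in this paper but a reconstruction of the cited one, and in substance it is sound: for the monotone pattern every cluster is a single increasing run, so a $j$-cluster with consecutive shifts $d_i=m-k_i\in\{1,\dots,m-1\}$ contributes exactly one underlying permutation and hence $z^L/L!$ with $L=m+\sum_i d_i$, and the signed sum over overlap profiles collapses via the geometric series $\sum_{j\geq 1}(-h(x))^{j-1}=\frac{1-x}{1-x^m}$, $h(x)=x+\cdots+x^{m-1}$, leaving only the residues $0$ and $1$ modulo $m$ --- exactly the two arithmetic progressions in $g$. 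For the final step you need less than a transfer theorem: since $g$ is entire and $P=1/g$, the singularities of $P$ are the zeros of $g$; $P$ has nonnegative coefficients and finite radius of convergence (as $\rho_\sigma>0.7839$ by~\cite{e2006}), so Pringsheim puts the dominant singularity at the smallest positive zero $z_0$ of $g$, and the existence of the limit defining $\rho_\sigma$ gives $\rho_{(12\dots m)}=1/z_0$ directly.

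One bookkeeping slip should be fixed. With your convention $\Omega(z)=\sum_{j\geq 1}(-1)^{j+1}\omega_j(z)$, where each $\omega_j$ has nonnegative coefficients, the cluster method gives
\begin{equation*}
P(z)=\Bigl(1-z-\sum_{j\geq 1}(-1)^{j}\omega_j(z)\Bigr)^{-1}=\bigl(1-z+\Omega(z)\bigr)^{-1},
\end{equation*}
not $\bigl(1-z-\Omega(z)\bigr)^{-1}$: as written your denominator would start $1-z-\frac{z^m}{m!}+\cdots$, which is not $g(z)$. A quick sanity check at $m=2$ confirms the correct sign: clusters are increasing runs of every length $\geq 2$, the denominator becomes $1-z+\frac{z^2}{2!}-\frac{z^3}{3!}+\cdots=e^{-z}=g(z)$, and indeed $P(z)=e^{z}$, matching the unique (decreasing) permutation of each length that avoids $12$ consecutively. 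With that sign corrected, your collapse yields exactly $g(z)$ and the rest of the argument stands.
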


From this last theorem we can extract an explicit lower bound on $\rho_{(12\dots m)}$.
\begin{lemma}\label{lem:cor_en}
 For any $m$ large enough,
$$
\rho_{(12\dots m)}\geq 1-\frac{1}{m!} +\frac{1}{m\cdot m!}+
O\left(\frac{1}{m^2\cdot m!}\right)\;.
$$ 
\end{lemma}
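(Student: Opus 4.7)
The plan is to exhibit a real number $z^\ast$ slightly above $1$ at which $g(z^\ast) \leq 0$. Since $g(0)=1>0$ and $g$ is continuous, the intermediate value theorem then forces the smallest positive root $z_0 = \rho_{(12\dots m)}^{-1}$ to lie in $(0, z^\ast]$, and inverting gives the stated lower bound on $\rho_{(12\dots m)}$.

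From the series definition of $g$, I would first extract, by separating the $i\leq 1$ contributions from the tails, the estimates
$$g(1) = \frac{1}{m!} - \frac{1}{(m+1)!} + O\!\left(\frac{1}{(2m)!}\right), \qquad g'(1) = -1 + \frac{m-1}{m!} + O\!\left(\frac{1}{(2m-1)!}\right),$$
together with a uniform bound $|g''(z)| = O(1)$ on a bounded neighbourhood of $1$; the latter comes for free either from a termwise estimate of the derived series or from the roots-of-unity representation $g(z) = \tfrac{1}{m}\sum_{j=1}^{m-1}(1-\omega^{-j})e^{\omega^j z}$, with $\omega$ a primitive $m$-th root of unity.

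The candidate I would then test is $z^\ast := 1 + \tfrac{1}{m!} - \tfrac{1}{m\cdot m!} + \tfrac{C}{m^2\cdot m!}$ for a constant $C$ (any $C>1$ will do). Taylor expanding,
$$g(z^\ast) = g(1) + g'(1)(z^\ast-1) + \tfrac{1}{2}\,g''(\xi)(z^\ast-1)^2,$$
and using the identity $\tfrac{1}{(m+1)!} = \tfrac{1}{m\cdot m!} - \tfrac{1}{m(m+1)\cdot m!}$, the leading $\tfrac{1}{m!}$ contributions cancel and what survives is
$$g(z^\ast) = \left(\frac{1}{m(m+1)} - \frac{C}{m^2}\right)\frac{1}{m!} + o\!\left(\frac{1}{m^2\cdot m!}\right),$$
which is strictly negative for $m$ large once $C>1$. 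From $z_0 \leq z^\ast$, expanding $1/z^\ast$ as a geometric series yields $\rho_{(12\dots m)} \geq 1 - \tfrac{1}{m!} + \tfrac{1}{m\cdot m!} + O(\tfrac{1}{m^2\cdot m!})$.

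The main, though essentially routine, obstacle is the bookkeeping of the error terms. One must verify that the Taylor remainder $\tfrac{1}{2}g''(\xi)(z^\ast-1)^2 = O(1/(m!)^2)$, the cross contribution $(g'(1)+1)(z^\ast-1) = O(1/((m-1)!\cdot m!))$, and the tails of the series for $g(1)$, of order $O(1/(2m)!)$, are all $o(1/(m^2\cdot m!))$. Each comparison is straightforward but needs to be spelled out to justify the exponent in the final $O$-term.
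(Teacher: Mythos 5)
Your proposal is correct. It starts from the same key ingredient as the paper (the Elizalde--Noy characterization of $z_0=\rho_{(12\dots m)}^{-1}$ as the smallest root of $g$) but extracts the explicit bound differently: you test the single point $z^\ast=1+\frac{1}{m!}-\frac{1}{m\cdot m!}+\frac{C}{m^2\cdot m!}$ via a second-order Taylor expansion of $g$ at $1$ and the intermediate value theorem, whereas the paper first majorizes $g$ by the truncated polynomial $f(z)=1-z+\frac{z^m}{m!}-\frac{z^{m+1}}{(m+1)!}+\frac{z^{2m}}{(2m)!}$, substitutes $z=(1-\eps)^{-1}$, and bounds the smallest root of the resulting quadratic inequality~\eqref{eq:explicit_monotone} via the asymptotic root formula~\eqref{eq:sec_deg_eq}. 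Your route is leaner in two respects: it needs only one-sided information ($g(z^\ast)\leq 0$ forces a root below $z^\ast$, so no majorization claim about the alternating tail is required), and it replaces the quadratic-root asymptotics by a routine Taylor estimate; your bookkeeping is also right ($g''=O(1)$ near $1$ makes the remainder $O((m!)^{-2})$, the cross term is $O(((m-1)!\,m!)^{-1})$, the tails are $O(1/(2m)!)$, all $o(1/(m^2 m!))$, and any fixed $C>1$ leaves a main term at most $(1-C)/(m^2 m!)$). What the paper's more explicit route buys is the concrete quadratic~\eqref{eq:explicit_monotone}, which is reused later to check that the CMP conjecture holds for every $m\geq 5$, not just for $m$ large; your purely asymptotic test point would need explicit constants in the error terms to recover such small-$m$ statements.
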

\begin{proof}
Observe that
$$
f(z)=1-z+\frac{z^m}{m!}-\frac{z^{m+1}}{(m+1)!}+ \frac{z^{2m}}{(2m)!}\geq g(z)\;,
$$
since $g(z)$ is an alternating sum whose terms are strictly decreasing. Since $g(0)=1$ and $z_0$ is
the smallest root of $g(z)$ we can conclude that $z_1$, the smallest root of $f(z)$, is at
least $z_0$. Thus $ \rho_{(12\dots m)}\geq 1/z_1$ and it suffices to compute an upper bound on
$z_1$.

Write $z= (1-\eps)^{-1}$, then $z^{-2m}f(z)=0$ becomes 
$$
-(1-\eps)^{2m-1}\eps+\frac{(1-\eps)^{m-1}}{(m+1)!}(m-(m+1)\eps)+\frac{1}{(2m)!}=0\,.
$$
Using $ 1-nx\leq (1-x)^n\leq 1-nx+n^2x^2$,

\begin{eqnarray}\label{eq:explicit_monotone}
0&\leq& -(1-(2m-1)\eps)\eps +\frac{1-(m-1)\eps+ (m-1)^2 \eps^2}{(m+1)!}(m-(m+1)\eps)
+\frac{1}{(2m)!}\nonumber\\
&\leq &\left(2m-1+\frac{(m-1)(m^2+1)}{(m+1)!}\right)\eps^2-\left(1+\frac{m^2+1}{(m+1)!}
\right)\eps+\left(\frac{m}{(m+1)!}+\frac{1}{(2m)!}\right) \;.
\end{eqnarray}

Let $\eps'$ be the solution of the last equation with equality. Then, $\rho_{(12\dots m)}\geq
(1-\eps')$. If $m$ is large enough we can get an asymptotic expression for $\eps'$.
Suppose that $b^2\gg 4ac$, then the smallest solution of $ax^2+bx+c=0$ can be
approximated by 

\begin{eqnarray}\label{eq:sec_deg_eq}
x=-\frac{c}{b}-\frac{ac^2}{b^3} + O\left(\frac{a^2c^3}{b^5}\right)\;.
\end{eqnarray}

This leads to
\begin{eqnarray*}
\eps'&=& \frac{\frac{m}{(m+1)!}+\frac{1}{(2m)!}}{1+\frac{m^2+1}{(m+1)!}}+
O\left(\frac{m^3}{(m+1)!^2}\right)\\
&= & \frac{m}{(m+1)!}+
O\left(\frac{m^3}{(m+1)!^2}\right)\\
&= & \frac{1}{m!\left(1+\frac{1}{m}\right)}+
O\left(\frac{m^3}{(m+1)!^2}\right)\\
&= & \frac{1}{m!} -\frac{1}{m\cdot m!}+
O\left(\frac{1}{m^2\cdot m!}\right)\; ,\\
\end{eqnarray*}
where we have used $(1+x)^{-1}=1-x+O(x^2)$ in the second and the last inequalities. This proves the
lemma.
\end{proof}


The CMP conjecture comes as an straightforward corollary of
Theorem~\ref{thm:UB} together with Lemma~\ref{lem:cor_en}.
\begin{corollary}\label{cor:conjecture}
For any large enough $m$, the CMP conjecture is true. Moreover, for any $\sigma \in 
\S_m\setminus \{(12\dots m),(m\dots 21)\}$,
$$
1-\rho_{\sigma}\geq \left(1+\frac{1}{m}\right)(1-\rho_{(12\dots m)})\;.
$$
\end{corollary}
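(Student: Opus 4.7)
The plan is to combine the two results established earlier in the excerpt—Theorem~\ref{thm:UB} and Lemma~\ref{lem:cor_en}—and compare the resulting estimates for $\rho_\sigma$ and $\rho_{(12\dots m)}$.

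Rewriting both bounds in terms of $1-\rho$, for any non-monotone $\sigma\in\S_m$,
$$
1-\rho_\sigma \;\geq\; \frac{1}{m!}\left(1-O\!\left(\tfrac{1}{m^{2}}\right)\right),\qquad 1-\rho_{(12\dots m)} \;\leq\; \frac{1}{m!}\left(1-\tfrac{1}{m}+O\!\left(\tfrac{1}{m^{2}}\right)\right).
$$
For the CMP part of the corollary it is enough to subtract these two estimates: the dominant term of the difference is $\frac{1}{m\cdot m!}$, coming from the gap between the leading corrections, and it strictly beats the $O(1/(m^{2}m!))$ error terms on both sides once $m$ is large. This immediately yields $\rho_\sigma<\rho_{(12\dots m)}$, i.e.\ Conjecture~\ref{conj:most avoided pattern}.

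For the quantitative strengthening I would multiply the upper bound on $1-\rho_{(12\dots m)}$ by $1+\frac{1}{m}$. Since $(1+1/m)(1-1/m)=1-1/m^{2}$, the first-order corrections cancel and I get
$$
\left(1+\tfrac{1}{m}\right)\bigl(1-\rho_{(12\dots m)}\bigr) \;\leq\; \frac{1}{m!}\left(1+O\!\left(\tfrac{1}{m^{2}}\right)\right).
$$
Comparing this with the lower bound $1-\rho_\sigma\geq \tfrac{1}{m!}(1-O(1/m^{2}))$, both sides are of the form $\tfrac{1}{m!}(1+o(1))$ and the required inequality reduces to a comparison of the $O(1/m^{2})$ corrections.

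The main (and essentially only) obstacle is a careful bookkeeping of the hidden constants. One has to track back through the proof of Theorem~\ref{thm:UB} to the explicit estimate $\rho_\sigma\leq 1-\tfrac{1}{m!}+\tfrac{14}{m^{2}\,m!}$, and through the proof of Lemma~\ref{lem:cor_en} to see that its $O(1/(m^{2}m!))$ error is controlled by the tail of the alternating expansion of $(1+1/m)^{-1}$. Once these explicit constants are on the table, taking $m$ sufficiently large makes the comparison go through and delivers the desired inequality.
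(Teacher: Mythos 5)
Your overall route is the same as the paper's: the paper gives no separate argument for this corollary, it simply combines Theorem~\ref{thm:UB} with Lemma~\ref{lem:cor_en}, and your treatment of the qualitative CMP part (comparing the leading corrections $-\frac{14}{m^2m!}$ versus $+\frac{1}{m\,m!}$) is exactly that and is fine.

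The gap is in your final step, where you assert that once the explicit constants are on the table the comparison for the factor $1+\frac{1}{m}$ "goes through". Track the constants: the proof of Theorem~\ref{thm:UB} yields $1-\rho_\sigma\geq \frac{1}{m!}\bigl(1-\frac{14}{m^2}\bigr)$, while the proof of Lemma~\ref{lem:cor_en} yields $1-\rho_{(12\dots m)}\leq \eps'=\frac{m}{(m+1)!}+O\bigl(\frac{m^3}{((m+1)!)^2}\bigr)$. Multiplying the latter by $1+\frac{1}{m}$ cancels the first-order correction \emph{exactly}, since $\bigl(1+\frac{1}{m}\bigr)\frac{m}{(m+1)!}=\frac{1}{m!}$, so the available upper bound on the right-hand side is $\frac{1}{m!}$ up to a superexponentially small error, not $\frac{1}{m!}\bigl(1-\Theta(1/m^2)\bigr)$. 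Hence the comparison of the $1/m^2$-order terms goes the wrong way: your lower bound $\frac{1}{m!}\bigl(1-\frac{14}{m^2}\bigr)$ sits strictly below the bound $\frac{1}{m!}\bigl(1+o(1/m^2)\bigr)$ you must beat, and no choice of "$m$ sufficiently large" fixes this, because the $-\frac{14}{m^2}$ loss comes from the genuine $\Delta/\mu=\Theta(1/m^2)$ term in Suen's inequality and cannot be absorbed. What actually follows from the two results is only $1-\rho_\sigma\geq \bigl(1+\frac{1}{m}-O\bigl(\frac{1}{m^2}\bigr)\bigr)\bigl(1-\rho_{(12\dots m)}\bigr)$; obtaining the clean factor $1+\frac{1}{m}$ would require an upper bound of the form $\rho_\sigma\leq 1-\frac{1}{m!}$ with a non-positive second-order term, which Theorem~\ref{thm:UB} does not provide. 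To be fair, the paper makes the same silent leap (it states the corollary without proof), but as written your last paragraph does not close the argument.
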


Thus, this corollary does not only show that the CMP conjecture is true, but also provides a lower
estimation of the minimum gap between $\rho_{(12\dots m)}$ and $\rho_{\sigma}$, for any $\sigma \in 
\S_m\setminus \{(12\dots m),(m\dots 21)\}$.

Note that the last corollary holds for any $m$ large enough. An upper bound without any assumption on
$m$ that can be derived from~\eqref{eq:SuenUB2} and~\eqref{eq:explicit_Delta} in
Theorem~\ref{thm:UB}. Comparing this bound with the exact lower bound that follows
from~\eqref{eq:explicit_monotone} in Lemma~\ref{lem:monotone}, one can check that the CMP conjecture
holds for any $m\geq 5$.

As we will see in the next section, it is also possible to provide a lower bound on $\rho_{\sigma}$
in
terms of $m$ without using generating functions. Unlike the upper bound case, the lower bound just
takes into account the number of dependencies among the events, but not the nature of these
dependencies. It might be interesting to give a direct proof of the lower bound for the monotone
pattern (Lemma~\ref{lem:monotone}), which does not rely upon any other result. For such a purpose
it would be useful to understand the probabilities $\Pr\left(A_i\mid \bigcap_{j<i}
\overline{A_j}\right)$ when $\sigma=(12\dots m)$.

\section{A lower bound on $\rho_{\sigma}$.}\label{sec:LB}

The setting used to give an upper bound to the number of permutations avoiding a given pattern can
be also used to provide a lower bound on $\rho_{\sigma}$. Now we need a way to bound from
below the probability that $X=0$ and for such a purpose we will use the Lov\'asz Local
Lemma. 

Usually, the Local Lemma is used to show the existence of a certain configuration that does
not satisfy any of the bad events in $\A$. However, in our problem it is trivial to see that for
any pattern $\sigma\in \S_m$ there exists at least one permutation of length $n$ that avoids
$\sigma$.
Nevertheless, it also provides an explicit lower bound on the probability that such configuration
exists, giving a lower estimation on the number of such configurations. We will use it to derive a
lower bound on the number of permutations of length $n$ that avoid $\sigma$.

The following version of the Local Lemma was proposed by Peres and Schlag in~\cite{ps2010} and it
is convenient for our approach. 
 \begin{lemma}[One--sided Local Lemma]
 \label{lem:OSLLL}
  Let $x_1,x_2,\dots,x_N$ be a sequence of numbers in $(0, 1)$. Assume that, for every $i\in N$,
there is an integer $0 < m(i) \leq i$ such that
\begin{eqnarray}\label{eq:cond}
\Pr\left(A_i\mid \bigcap_{j<m(i)} \overline{A_j}\right)\leq x_i\prod_{k=m(i)}^{i-1}(1-x_k)\;.
\end{eqnarray}
Then,
\begin{eqnarray}\label{eq:OSLLL}
\Pr(X=0) = \Pr \left(\bigcap_{i=1}^N \overline{A_i}\right) \geq \prod_{i=1}^N (1-x_i)\;.
\end{eqnarray}
\end{lemma}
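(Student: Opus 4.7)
The plan is to prove the stronger pointwise inequality
\[
\Pr\!\left(A_i \mid \bigcap_{j<i} \overline{A_j}\right) \leq x_i \qquad\text{for all } 1 \leq i \leq N,
\]
by induction on $i$. Once this is established, the conclusion follows immediately from the chain rule:
\[
\Pr\!\left(\bigcap_{i=1}^{N}\overline{A_i}\right) \;=\; \prod_{i=1}^{N}\Pr\!\left(\overline{A_i}\mid \bigcap_{j<i}\overline{A_j}\right) \;\geq\; \prod_{i=1}^{N}(1-x_i),
\]
which is exactly \eqref{eq:OSLLL}. The base case $i=1$ is already handled by hypothesis~\eqref{eq:cond}, since $m(1)=1$ forces an empty product on the right-hand side and an empty intersection on the conditioning.

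For the inductive step, suppose the bound holds for every $\ell<i$ and fix the prescribed index $m(i)$. I would split the conditioning prefix $\bigcap_{j<i}\overline{A_j}$ into the ``old'' part $B:=\bigcap_{j<m(i)}\overline{A_j}$ and the ``new'' part $C:=\bigcap_{k=m(i)}^{i-1}\overline{A_k}$, and write
\[
\Pr(A_i\mid B\cap C) \;=\; \frac{\Pr(A_i\cap C\mid B)}{\Pr(C\mid B)} \;\leq\; \frac{\Pr(A_i\mid B)}{\Pr(C\mid B)}.
\]
The numerator is bounded by hypothesis~\eqref{eq:cond} by $x_i\prod_{k=m(i)}^{i-1}(1-x_k)$. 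For the denominator, I apply the chain rule again,
\[
\Pr(C\mid B) \;=\; \prod_{k=m(i)}^{i-1}\Pr\!\left(\overline{A_k}\mid \bigcap_{j<k}\overline{A_j}\right),
\]
and use the induction hypothesis on each factor to bound this below by $\prod_{k=m(i)}^{i-1}(1-x_k)$. Dividing, the product cancels and yields $\Pr(A_i\mid B\cap C)\leq x_i$, closing the induction.

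The only delicate point, and what I view as the main obstacle, is the correct handling of the conditioning. Unlike the classical Local Lemma, there is no explicit dependency graph and no symmetric ``neighborhood'' to condition on; the whole argument rests on the asymmetric choice of the single cutoff $m(i)$, which must be an initial segment in the linear order on indices so that the chain-rule decomposition of $\Pr(C\mid B)$ matches the inductive hypotheses factor by factor. One should also check that all the conditional probabilities are well-defined, i.e.\ that the conditioning events have positive probability at every stage; this can be ensured by noting that if the first such event to vanish is $\bigcap_{j\leq \ell}\overline{A_j}$, then the induction up to $\ell$ already gives a strictly positive lower bound for its probability, a contradiction. With these points in place, the inductive argument goes through and \eqref{eq:OSLLL} follows.
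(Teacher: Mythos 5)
Your proof is correct. Note that the paper itself does not prove this lemma at all: it is quoted from Peres and Schlag~\cite{ps2010}, so there is no internal proof to compare against. Your argument — inductively establishing the stronger bound $\Pr\left(A_i \mid \bigcap_{j<i}\overline{A_j}\right)\leq x_i$ by splitting the conditioning into $B=\bigcap_{j<m(i)}\overline{A_j}$ and $C=\bigcap_{k=m(i)}^{i-1}\overline{A_k}$, bounding the numerator by hypothesis~\eqref{eq:cond} and the denominator by the chain rule plus the induction hypothesis, and then concluding via the chain rule over all $i$ — is exactly the standard inductive proof of this one-sided version, and your remark on the positivity of the conditioning events (guaranteed by the already-established lower bound $\prod_{j\leq\ell}(1-x_j)>0$) closes the only delicate point.
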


To use the Local Lemma a dependency graph on the set of events must be set. In
the case of the one-sided version, the graph is defined implicitly in~\eqref{eq:cond} as the
directed circulant graph with out-degree $i-m(i)$. Thus, the same dependency graph used for Suen's
inequality is also valid to apply the Local Lemma. 

Next, we give the proof of the lower bound on
$\rho_{\sigma}$.
\begin{proof}[Proof of Theorem~\ref{thm:LB}]
Let $\A=\{A_0,\dots,A_{n-m}\}$ and $X$ be defined as in Section~\ref{sec:UB}.
Set $m(i)=i-m+1$. Using Lemma~\ref{lem:indep} with $S=\{i\}$ and $T=\{0,1\dots,i-m\}$
\begin{eqnarray}\label{eq:0}
\Pr\left(A_i\mid \bigcap_{j\leq i-m} \overline{A_j}\right) = \Pr\left(A_i\right)\;.
\end{eqnarray}

Since all the events are symmetric we set $x_i=x$, for any
$0\leq i\leq n-m$. Then, condition~\eqref{eq:cond} becomes
\begin{eqnarray}\label{eq:simpl_cond} 
\Pr\left(A_i\right) \leq x (1-x)^{m-1}\;.
\end{eqnarray}
Recall that $\Pr\left(A_i\right)=\frac{1}{m!}$.
Thus, the previous equation implies that $x>\frac{1}{m!}$. Besides, we are interested on
keeping $x$ as small as possible, because of~\eqref{eq:OSLLL}. Let us write 
$x=\frac{e^{f(m)}}{m!}$ for some positive function $f(m)$.  Hence, using $(1-x)\leq
e^{-x}$, condition~\eqref{eq:simpl_cond} implies
\begin{eqnarray*}
	\frac{e^{f(m)}(m-1)}{m!} &\leq& f(m)\;,
\end{eqnarray*}
which also implies $f(m)\geq \frac{m-1}{m!}$, since $f(m)\geq 0$.
By setting $x=\frac{e^{\frac{m-1}{m!}}}{m!}$, condition~\eqref{eq:cond} is satisfied and the
Local Lemma can be applied. In particular, we obtain the
following lower bound on the probability that $X=0$,
$$
\Pr(X=0)=\Pr\left(\bigcap_{i=0}^{n-m} \overline{A_i}\right)\geq
\left(1-\frac{e^{(m-1)/m!}}{m!}\right)^{n-m+1}\;.
$$
and using~\eqref{eq:relation},
$$
\rho_{\sigma}\geq 1-\frac{e^{(m-1)/m!}}{m!}= 1-\frac{1}{m!}-
O\left(\frac{m-1}{(m!^2)}\right)\;.
$$
\end{proof}
\vspace{0.3cm}

The lower bound given by Theorem~\ref{thm:LB} is tight. This can be shown using a result of
Elizalde in~\cite{e2012}, where the author proved that the least avoided pattern is $(12\dots
m-2,m,m-1)$. The author also gives an implicit lower bound to $ z_0 = \rho_{(12\dots m-2, m, m-1
)}^{-1}$ as the smallest root of
$$
f(z)=1-z+\frac{z^{m}}{m!}-m\frac{z^{2m+1}}{(2m-1)!}\;.
$$
An explicit upper bound can be derived from the previous equation, as in Lemma~\ref{lem:monotone}.
$$
\rho_{(12\dots m-2,m,m-1)} \leq 1-\frac{1}{m!} - O\left( \frac{m-1}{(m!)^2}\right)\;.
$$ 

In order to prove Conjecture~\ref{conj:least avoided pattern}, one could try to use the same
strategy we have used for the CMP conjecture. First, determine
the subset of patterns $\sigma$
such that $\alpha_n(\sigma)=\alpha_n(12\dots m-2,m,m-1)$ and finally, improve the lower bound for
the patterns which are not in the previous subset.
However, this approach is hopeless to tackle Conjecture~\ref{conj:least avoided pattern}. Notice
that no assumption on the properties of the
pattern has been used in the proof of the lower bound, like in the proof of the upper bound in
Theorem~\ref{thm:UB}. 
Unfortunately, the Local Lemma can not distinguish the different nature of the dependencies among
events. Thus, no better lower bound can be achieved by restricting
to a smaller subset of patterns. This is also the main problem to prove
Lemma~\ref{lem:monotone} using our approach.

In the next section we will improve the upper bound of Theorem~\ref{thm:UB} for large subsets of
patterns.

%

\section{The typical behavior of patterns.}\label{sec:whp}
The results of the previous sections provide tight upper and lower bounds on
$\rho_{\sigma}$ for any $\sigma\in \S_m$.
In this section we want to show that, for a typical pattern, $\rho_{\sigma}$ lies much closer to the
lower bound than to the upper bound. This is, the number of $\sigma$-avoiding permutations of length
$n$, when $\sigma\in\S_m$ chosen uniformly at random, is closer to the number of permutations
that avoid
$(12\dots m-2,m,m-1)$ than to the number of permutations that avoid $(12\dots m)$.

Define $\N_k\subseteq \S_m$ as the set of patterns of length $m$ that overlap at
position $k$. The following lemma bounds from above the size of these sets.
\begin{lemma}\label{lem:bound size N_k}
Let $\sigma\in \S_m$ chosen uniformly at random, then
\begin{enumerate}
 \item $\Pr(\sigma\in \N_k)= \frac{1}{k!}$ if $2\leq 2k\leq m$.
 \item $\Pr(\sigma\in \N_k)\leq 2^{-m/2}$ if
$m< 2k \leq 2(m-1)$.
\end{enumerate}
\end{lemma}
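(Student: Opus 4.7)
For part 1 ($2 \leq 2k \leq m$), the position sets $A=\{1,\dots,k\}$ and $B=\{j+1,\dots,m\}$ with $j=m-k$ are disjoint. Conditionally on the unordered value sets $\{\sigma_i:i\in A\}$ and $\{\sigma_i:i\in B\}$, the internal orderings within $A$ and within $B$ are independent and each uniform on $\S_k$, by the same symmetry argument already used to prove Lemma~\ref{lem:indep}. Summing over the $k!$ possible common patterns $\tau$ then yields
\[
\Pr(\st(\sigma_A)=\st(\sigma_B))=k!\cdot\tfrac{1}{(k!)^2}=\tfrac{1}{k!}.
\]

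For part 2 ($m<2k\leq 2(m-1)$), set $j=m-k<m/2$ and recall that the overlap at $k$ is equivalent to $\sigma_s<\sigma_t\iff\sigma_{s+j}<\sigma_{t+j}$ for every $s,t\in\{1,\dots,k\}$. Partition $\{1,\dots,m\}$ into $j$ chains by residue modulo $j$; applying the equivalence with $t=s+j$ forces each chain to be monotone. Since $j<m/2$, every chain has length $\ell_a\geq 2$. The number of permutations with every chain monotone is $(m!/\prod_a \ell_a!)\cdot 2^j$, so
\[
\Pr(\sigma\in\N_k)\leq\frac{2^j}{\prod_a \ell_a!}\leq\frac{2^j}{2^{m-j}}=2^{2j-m},
\]
using the elementary bound $\ell!\geq 2^{\ell-1}$ for $\ell\geq 1$ together with $\sum_a \ell_a=m$. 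This already gives the desired $2^{-m/2}$ when $j\leq m/4$.

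For the complementary regime $j>m/4$, I would combine the chain argument with the observation that the overlap at $k$, restricted to the first $j$ positions of each block, yields $\st(\sigma_{1..j})=\st(\sigma_{j+1..2j})$ on two disjoint size-$j$ blocks (since $2j<m$). By part 1 applied to these blocks, this event has probability $1/j!$, and Stirling's formula shows $1/j!\leq 2^{-m/2}$ once $j$ is sufficiently large in terms of $m$. The overall bound is thus
\[
\Pr(\sigma\in\N_k)\leq\min\Bigl\{2^{2j-m},\ \tfrac{1}{j!}\Bigr\}\leq 2^{-m/2}.
\]

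The delicate point is the transition region $j\approx m/4$, where for small $m$ neither bound individually sits below $2^{-m/2}$. I would address this by iterating the overlap condition to extract further disjoint-block equalities—for instance, $\st(\sigma_{1..s})=\st(\sigma_{j+1..j+s})=\st(\sigma_{2j+1..m})$ with $s=2k-m$, which (when $s\leq j$) contributes an additional factor $1/s!$ on three disjoint blocks of size $s$—and combining these with the chain-monotone count. The main obstacle is that chain monotonicity and the block equality are positively correlated, so one cannot simply multiply the marginal probabilities and must instead argue about their joint probability directly, either by conditioning on the values in overlapping sub-blocks or by choosing a maximal collection of genuinely independent constraints among the many equalities implied by the overlap.
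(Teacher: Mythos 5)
Your part 1 is correct and is essentially the paper's own argument: the two blocks occupy disjoint positions, so their standardizations are independent and uniform on $\S_k$, and summing over the $k!$ possible common patterns gives $1/k!$.

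Part 2 has a genuine gap. Writing $j=m-k$, your two estimates --- $2^{2j-m}$ from chain monotonicity and $1/j!$ from the single block equality $\st(\sigma_1,\dots,\sigma_j)=\st(\sigma_{j+1},\dots,\sigma_{2j})$ --- do not cover the range $m/4<j<m/2$ for moderate $m$, and the displayed claim $\min\{2^{2j-m},1/j!\}\le 2^{-m/2}$ is false there: for $m=26$, $k=19$ (so $j=7$) one has $2^{2j-m}=2^{-12}>2^{-13}$ and $1/7!=1/5040>2^{-13}=1/8192$; the case $m=20$, $k=14$ fails similarly. You acknowledge this transition region but leave it unresolved, and the patch you sketch (combining chain monotonicity with block equalities) does run into exactly the correlation obstacle you name. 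The missing idea --- and the paper's actual proof --- is to iterate the block equality itself rather than mix it with monotonicity: the overlap at $k$ forces \emph{all} of the disjoint consecutive blocks $\tau^i=\st(\sigma_{(i-1)j+1},\dots,\sigma_{ij})$, $1\le i\le\lfloor m/j\rfloor$, to coincide, not just the first two. Since these blocks sit on pairwise disjoint positions, the events $\{\tau^i=\tau^1\}$, $i>1$, are mutually independent given $\tau^1$, each of probability $1/j!$, so $\Pr(\sigma\in\N_k)\le (1/j!)^{\lfloor m/j\rfloor-1}$; this single bound is what the paper then estimates (by $2^{-m/2+1}$, the form it actually uses downstream) uniformly over $k\le m-2$, with $k=m-1$ handled separately via Lemma~\ref{lem:monotone} (in your framework the $j=1$ chain bound does the same job, since it just says $\sigma$ is monotone). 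This is precisely the ``maximal collection of genuinely independent constraints'' you were looking for: equalities between pairwise disjoint blocks carry no correlation issue, so no joint-probability analysis is needed.
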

\emph{Proof of $1.$} Choose $\sigma\in\S_m$ uniformly at random.
Recall that the condition for $\sigma\in\N_k$ is that
$\tau^1=\st(\sigma_1,\dots,\sigma_k)$ and 
$\tau^2=\st(\sigma_{m-k+1},\dots,\sigma_m)$ are equal. If $2k\leq m$, then $\tau^1$ and $\tau^2$ are
independent by Lemma~\ref{lem:indep} and uniformly distributed in $\S_k$. For any $\tau,\tau'\in \S_k$
$$
\Pr(\tau^1=\tau \mid  \tau^2=\tau'  ) = \Pr(\tau^1=\tau)\;.
$$
Thus, we can compute the exact probability of being in $\N_k$
$$
\Pr(\sigma\in \N_k) = \Pr(\tau^1=\tau^2) = \sum_{\tau \in \S_k} \Pr(\tau^1=\tau \wedge \tau^2=\tau)
= k! \Pr(\tau^1=\tau)^2 = \frac{1}{k!}\;.
$$\qed

\vspace{0.3cm}

\emph{Proof of $2.$}  Choose $\sigma\in\S_m$ uniformly at random. Partition the pattern
$\sigma$ in parts of length $m-k$ by defining
$\tau^i=\st(\sigma_{(m-k)(i-1)+1},\dots,\sigma_{(m-k)i})$ for any $1\leq
i\leq\lfloor\frac{m}{m-k}\rfloor$. Observe that, in order to have an overlap at $k$ we must have
$\tau_1=\tau_i$ for any $i>1$. This condition is clearly necessary but not sufficient for a pattern
to overlap at $k$.

Since $2k> m$, we have at least $\lfloor\frac{m}{m-k}\rfloor\geq 2$ parts. 
By the choice of $\sigma$, the permutations $\tau^i$ are uniformly distributed
in $S_{m-k}$, and by Lemma~\ref{lem:indep}, they are mutually independent. This implies,
$$
\Pr(\sigma\in \N_k)\leq \prod_{i>1} \Pr(\tau_i=\tau_1)=
\left(\frac{1}{(m-k)!}\right)^{\lfloor\frac{m}{m-k}\rfloor -1} \leq 2^{-m/2+1}\;,
$$
for any $k\leq m-2$. If $k=m-1$, $\N_{m-1}$ is the set of patterns with an overlap at $m-1$ and the
upper bound is directly implied by Lemma~\ref{lem:monotone}.
\qed

Unlike in the case when $k\leq 2m$, where we can determine exactly the size of $\N_k$, a non tight
upper bound is given when $k>2m$. Observe that the sets $\N_k$ cover all $\S_m$ but they are not a
partition of it. For instance, monotone patterns belong to all such sets, since they overlap at any
possible position. However, we conjecture that 
$$
|\N_k|\leq \frac{1}{k!}\;,
$$
for every $1\leq k\leq m-1$. 

We use the previous lemma to give a lower bound on the size of $\M_k$, the set of patterns that have
no overlap at any positions larger than $k$.
\begin{lemma}\label{lem:bound on M_k}
Let $\sigma\in \S_m$ chosen uniformly at random.
Then, for any $1\leq k\leq m/2$,
$$
\Pr\left(\sigma\in\M_{k}\right)\geq 1-\frac{2}{(k+1)!}-m2^{-m/2}.
$$
\end{lemma}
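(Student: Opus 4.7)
The plan is to decompose the complement of $\M_k$ as a union of the overlap sets $\N_j$ and then apply a union bound using Lemma~\ref{lem:bound size N_k}. By the definition of $\M_k$, a pattern $\sigma$ fails to lie in $\M_k$ precisely when $\sigma$ has an overlap at some position $j$ with $k+1 \leq j \leq m-1$. Thus
$$
\S_m \setminus \M_k = \bigcup_{j=k+1}^{m-1} \N_j,
$$
and a union bound gives
$$
\Pr(\sigma \notin \M_k) \leq \sum_{j=k+1}^{m-1} \Pr(\sigma \in \N_j).
$$

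Next I would split this sum according to which regime of Lemma~\ref{lem:bound size N_k} applies. Since we assume $k \leq m/2$, the range $k+1 \leq j \leq m-1$ splits into the "small overlap" range $k+1 \leq j \leq \lfloor m/2 \rfloor$, where part~(1) of the lemma gives the exact value $\Pr(\sigma\in\N_j)=1/j!$, and the "large overlap" range $\lfloor m/2\rfloor < j \leq m-1$, where part~(2) gives $\Pr(\sigma\in\N_j) \leq 2^{-m/2}$. This yields
$$
\Pr(\sigma \notin \M_k) \leq \sum_{j=k+1}^{\lfloor m/2 \rfloor} \frac{1}{j!} + \sum_{j=\lfloor m/2 \rfloor + 1}^{m-1} 2^{-m/2}.
$$

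For the first sum I would use the standard tail bound $\sum_{j \geq k+1} 1/j! \leq 2/(k+1)!$, which follows by comparing with a geometric series of ratio $1/(k+2) \leq 1/2$. The second sum has at most $m$ terms, so it is bounded by $m \cdot 2^{-m/2}$. Combining the two estimates gives
$$
\Pr(\sigma \notin \M_k) \leq \frac{2}{(k+1)!} + m\cdot 2^{-m/2},
$$
and taking complements finishes the proof. There is no substantive obstacle here: the lemma is essentially a packaging of Lemma~\ref{lem:bound size N_k} via a union bound, and the only step that requires care is choosing the split point at $\lfloor m/2\rfloor$ so that the relevant hypothesis $2j \leq m$ of part~(1) is satisfied in the first range.
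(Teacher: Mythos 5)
Your proposal is correct and follows essentially the same route as the paper: bound $\Pr(\sigma\notin\M_k)$ by a union bound over $\N_{k+1},\dots,\N_{m-1}$, split the sum at $\lfloor m/2\rfloor$ to apply the two parts of Lemma~\ref{lem:bound size N_k}, bound the factorial tail by $2/(k+1)!$ and the remaining at most $m$ terms by $m\cdot 2^{-m/2}$.
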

\begin{proof}

	Observe that we can bound from below
	the size of $\M_{k}$ using the sets $\N_k$,
	\begin{eqnarray}\label{eq:relation k}
	|\M_{k}|&=& \left|\S_m\setminus \bigcup_{\ell=k+1}^{m-1} \N_{\ell}\right|\geq  m! -
	\sum_{\ell=k+1}^{m-1} |\N_{\ell}|\;.
	\end{eqnarray}
	
By Lemma~\ref{lem:bound size N_k}, for any $k$ such that $2k\leq m$,
$$
\sum_{\ell=k+1}^{m-1} \Pr(\sigma\in\N_{\ell}) \leq \frac{1}{(k+1)!}+\frac{1}{(k+2)!}+\dots +
\frac{1}{\lfloor
m/2\rfloor!}+\frac{m}{2}2^{-m/2+1}\;.
$$

Using the relation in~\eqref{eq:relation k} gives 
\begin{eqnarray*}
\Pr\left(\sigma\in \M_{k}\right)\geq 1- \sum_{\ell=k+1}^{m-1} \Pr(\sigma\in
\N_{\ell})&\geq&
1-\sum_{\ell=k+1}^{m/2}\frac{1}{\ell!} -m2^{-m/2} \geq
1-\frac{2}{(k+1)!}-m2^{-m/2}\;.
\end{eqnarray*}
\end{proof}

Recall that $\M_1$ corresponds to the set of non-overlapping
patterns. The proof of Lemma~\ref{lem:bound on M_k} implies that $|\M_1|\geq (3-e)m!$. This bound can
be refined. Indeed, B\'ona~\cite{b2012} showed that
$$
0.364098149 \leq \frac{|\M_1|}{m!}\leq 0.3640992743\;.
$$ 
The previous bound on $|\M_k|$ is clearly non sharp.
A better estimation of the size of $\N_k$ when $2k>m$, would help to understand the distribution of
$\rho_{\sigma}$ when $\sigma\in\S_m$ is chosen uniformly at random.

Next lemma shows that a better bound on $\Delta$ can be given if the pattern does
not have a large overlap.
\begin{lemma}\label{lem:bound2}
	For any $\sigma\in \M_k$, 
	$$
	\Delta \leq \frac{4^{m-k}}{(2m-k)!}n \;.
	$$
\end{lemma}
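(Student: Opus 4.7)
The plan is to fix an index $i$, bound the sum of $\Pr(A_i \wedge A_j)$ over all neighbors $j > i$ of $i$ in the dependency graph, and then sum this bound over $i$ to get $\Delta$. The main saving for $\sigma \in \M_k$ over the general case (equation~\eqref{eq:explicit_Delta}) is that the first $m-k-1$ terms in the inner sum vanish.

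First I would use the definition of $\M_k$: the pattern $\sigma$ has no overlap at any position $\ell > k$. Combined with~\eqref{eq:incompatible}, this forces $\Pr(A_i \wedge A_{i+m-\ell}) = 0$ whenever $\ell > k$, i.e., whenever $0 < j - i < m - k$. Consequently, for $\sigma \in \M_k$,
\begin{eqnarray*}
\sum_{j=i+1}^{i+m-1} \Pr(A_i \wedge A_j) \;=\; \sum_{\ell=1}^{k} \Pr(A_i \wedge A_{i+m-\ell})
\;\leq\; \sum_{\ell=1}^{k} \frac{4^{m-\ell}}{\sqrt{\pi(m-\ell)}(2m-\ell)!},
\end{eqnarray*}
where the last inequality is Lemma~\ref{lem:bound1}.

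Next I would consolidate this sum into the single leading term, i.e., the one at $\ell = k$. Denoting the summands by $b_\ell$, a direct computation gives
\begin{eqnarray*}
\frac{b_{\ell-1}}{b_\ell} \;=\; \frac{4}{2m-\ell+1}\sqrt{\frac{m-\ell}{m-\ell+1}} \;\leq\; \frac{4}{2m-k+1},
\end{eqnarray*}
uniformly for $\ell \leq k$. For $m - k$ reasonably large this ratio is at most $1/2$, so the series is dominated by a geometric tail and
\begin{eqnarray*}
\sum_{\ell=1}^{k} b_\ell \;\leq\; \frac{b_k}{1 - 4/(2m-k+1)} \;\leq\; \frac{2\cdot 4^{m-k}}{\sqrt{\pi(m-k)}\,(2m-k)!} \;\leq\; \frac{4^{m-k}}{(2m-k)!},
\end{eqnarray*}
where the final step absorbs the constant $2/\sqrt{\pi(m-k)} \leq 1$ (valid once $m - k \geq 2$).

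Finally, summing over $i = 0, 1, \dots, n-m$ gives
\begin{eqnarray*}
\Delta \;=\; \sum_{i=0}^{n-m}\sum_{j : ij \in E(H),\, j>i} \Pr(A_i \wedge A_j) \;\leq\; (n-m+1)\cdot\frac{4^{m-k}}{(2m-k)!} \;\leq\; \frac{4^{m-k}}{(2m-k)!}\,n,
\end{eqnarray*}
which is the claimed bound. The only delicate point is the geometric-tail consolidation: one must check that the ratio $4/(2m-k+1)$ is small enough to collapse the sum to its last term with an extra factor that the $1/\sqrt{\pi(m-k)}$ in Lemma~\ref{lem:bound1} absorbs. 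In the regime of interest for Theorem~\ref{thm:whp}, where $k \leq m/2$, this is comfortably satisfied.
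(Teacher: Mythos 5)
Your proof is correct and follows essentially the same route as the paper: drop the terms with $\ell>k$ using~\eqref{eq:incompatible}, apply Lemma~\ref{lem:bound1} to the surviving terms, and collapse the resulting sum to its last term before summing over $i$. The only difference is that you justify the collapse explicitly via the geometric ratio $4/(2m-k+1)$ and the absorption of $2/\sqrt{\pi(m-k)}$ (valid for $m-k\geq 2$, which covers the regime $k\leq m/2$ used in Theorem~\ref{thm:whp}), a step the paper asserts without detail.
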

\begin{proof}
Since $\sigma\in\M_k$ we have $\Pr(A_i\wedge A_{i+m-j})=0$ for any $j$ such that $k<j\leq m-1$. Using
Lemma~\ref{lem:bound1}
\begin{eqnarray*}
\Delta &\leq& \sum_{i=0}^{n-m} \sum_{j=1}^{k}\Pr(A_i\wedge A_{i+m-j}) \\
& \leq  & n \sum_{j=1}^{k}\frac{4^{m-j}}{\sqrt{\pi(m-j)}(2m-j)!}\\
& \leq & \frac{4^{m-k}}{(2m-k)!}n\;.
\end{eqnarray*}
\end{proof}

\begin{proof}[Proof of Theorem~\ref{thm:whp}]
%
Assume that $\sigma\in\M_{k}$.
It follows from Lemma~\ref{lem:bound2} that
$$
\frac{\Delta}{\mu} \leq \frac{4^{m-k} m!}{\left(2m-k\right)!} =
\frac{4^{m}}{\binom{2m-k}{m}\left(m-k\right)!} \leq \frac{4^{m}}{
\left(m-k\right)!}\;.
$$
Since $e^{2\delta}\leq e^{4/(m-1)!}\leq 2$ for any $m\geq 4$, using~\eqref{eq:SuenUB2} we can
derive the following upper bound,
$$
\Pr(X=0)\leq \exp\left(-
\frac{1-O\left(\frac{4^{m}}{\left(m-k\right)!}\right)}{m!}n\right)\;.
$$
From~\eqref{eq:relation},
$$
\rho_{\sigma} = \lim_{n\to\infty} \Pr(X=0)^{1/n}\leq 1-
\frac{1}{m!}+O\left(\frac{4^m}{\left(m-k\right)!m!}\right)\;,
$$
where we have used $e^{-a}\leq 1-\frac{a}{1+a}$.

This upper bound holds when $\sigma\in\M_k$, and this holds with probability at least
$1-\frac{2}{(k+1)!}-m2^{-m/2}$ when $\sigma$ is chosen uniformly at random, by Lemma~\ref{lem:bound
on M_k}.
\end{proof}


\begin{acknowledgement}
The author is grateful to Marc Noy and Oriol Serra for 
helpful discussions.
\end{acknowledgement}
\bibliography{patterns}
\bibliographystyle{amsplain}

\end{document}